\newcommand{\bsx}{\boldsymbol{x}}%
\newcommand{\bsy}{\boldsymbol{y}}%
\newcommand{\bsalpha}{\boldsymbol{\alpha}}%
\newcommand{\NN}{\mathbb N}
\newcommand{\ZZ}{\mathbb Z}
\newcommand{\RR}{\mathbb R}
\newcommand{\PP}{\mathbb P}
\newcommand{\vol}{{\rm vol}}
\newcommand{\FF}{\mathbb{F}}
\newtheorem{theorem}{Theorem}[section]
\newtheorem{lemma}[theorem]{Lemma}
\newtheorem{example}[theorem]{Example}
\newtheorem{prop}[theorem]{Proposition}
\newtheorem{coro}[theorem]{Corollary}
\theoremstyle{definition}
\newtheorem{definition}[theorem]{Definition}
\numberwithin{equation}{section}
\begin{document}
\title{Variants of the Littlewood conjecture, their connection to uniformly distributed sequences, and the exact order of the discrepancy of van der Corput--Kronecker-type sequences}
\author       {Roswitha Hofer\thanks{Institute of Financial Mathematics and Applied Number Theory, Johannes Kepler University Linz, Altenbergerstr. 69, 4040 Linz, Austria. e-mail: roswitha.hofer@jku.at}}




\maketitle

\begin{abstract}The aims of this paper are twofold. First, it discusses the Littlewood conjecture and its variants with respect to uniformly distributed sequences. The second aim is 
to determine the exact order of the discrepancy of the van der Corput--Kronecker-type sequences which are based on recent counterexamples to the $X$-adic Littlewood conjecture over a finite field. Our result on the exact order of the discrepancy supports the well-established conjecture in the theory of uniform distribution, which states that $D_N\leq c \frac{\log^s N}{N}$, with $c>0$ for all $N>1$ is the best possible upper bound for the discrepancy $D_N$ of a sequence in $[0,1)^s$. 
\end{abstract}

%


\noindent{\textbf{Keywords:} 
Variants of the Littlewood conjecture, van der Corput--Kronecker-type sequences, discrepancy, lower bounds}\\
\noindent{\textbf{MSC2010:} 11K31; 11K38; 11J13.}

\section{Uniform distribution, discrepancy and the Littlewood conjecture}\label{sec:1}

The Littlewood Conjecture and its variants have been the subject of frequent investigation in recent decades. Similar problems to these variants have arisen independently in the theory of uniform distribution, where the discrepancy of so-called Kronecker, Kronecker-type, Halton--Kronecker and Halton--Kronecker-type sequences has been investigated.
This manuscript aims to provide an overview of variants of the Littlewood conjecture and their relevance to specific uniformly distributed sequences. This overview is intentionally brief and does not provide a complete list of all references, but rather aims to offer a concise starting point. 

We start with the definition of the uniform distribution and the discrepancy of a sequence, before giving two well-established examples of one-dimensional uniformly distributed, low-discrepancy sequences.

\begin{definition}
	A sequence $(\bsx_n)_{n\geq 0}$ in $[0,1)^s$ is \emph{uniformly distributed} if 
	$$\lim_{N\to\infty}\frac{\#\{0\leq n<N: \bsx_n\in J\}}{N}\,=\,\vol(J)$$
	for every subinterval $J=\prod_{i=1}^s[a_i,b_i)$ of $[0,1)^s$ with $0\leq a_i<b_i\leq 1$ for $i=1,\ldots,s$. 

	The \emph{discrepancy} $D_N$ of the first $N\in\NN$ points of a sequence $(\bsx_n)_{n\geq 0}$ in $[0,1)^s$ is defined as  
	$$D_N(\bsx_n):=\sup_{J} \left|\frac{\#\{0\leq n<N: \bsx_n \in J\}}{N}-\vol(J)\right|$$
	where the supremum is extended over all such subintervals $J=\prod_{i=1}^s[a_i,b_i)$ of $[0,1)^s$. 
	The star-discrepancy $D^*_N$ is obtained when taking the supremum over subintervals of the form $\prod_{i=1}^s[0,b_i)$. 
\end{definition}
Note that for a sequence $(\bsx_n)_{n\geq 0}$ in $[0,1)^s$ we always have 
\begin{align}\label{eq:starvsnonstar}
D^*_N\leq D_N\leq 2^s D^*_N
\end{align}
 for every $N$. 
\begin{example}\label{ex:1}{\rm
	\begin{enumerate}
		\item Let $b\geq 2,\,b\in\NN$. We define the $b$-adic radical inverse function $\varphi_b(n)$ of $n\in\NN_0$ as $$\varphi_b(n)=\frac{n_0}{b}+\frac{n_1}{b^2}+\frac{n_2}{b^3}+\cdots,$$
		where $n=n_0+n_1b+n_2b^2+\ldots$ with $n_i\in\{0,1,\ldots,b-1\}$ is the unique base $b$ expansion of $n$. 
		The so-called $b$-adic van der Corput sequence $(\varphi_b(n))_{n\geq 0}$ is well known as {uniformly distributed} sequence and its {discrepancy} satisfies $ND_N(\varphi_b(n))=O(\log N)$. 
		\item Let $\alpha\in\RR$. Then the \emph{$(n\alpha)$-sequence}, $(\{n\alpha\})_{n\geq 0}$, is {uniformly distributed} if and only if $\alpha$ is irrational. 
		Represent $\alpha\in[0,1)$ in its simple continued fraction $\alpha=[a_1,a_2,a_3,\ldots]$ with convergents $p_j/q_j$ then the discrepancy of the \emph{$(n\alpha)$-sequence} satisfies 
		$$\Omega\left(\sum_{i=1}^{m(N)}a_i\right) =ND_N(\{n\alpha \})=O\left(\sum_{i=1}^{m(N)}a_i\right)$$ 
		where $q_{m(N)-1}<N\leq q_{m(N)}$ (cf. e.g. \cite[Corollary~1.64]{DT}). Consequently, if the coefficients in the simple continued fraction of $\alpha$ are bounded, then 
		\begin{equation}\label{eq:disc_nalpha}
		\Omega\left(\log N\right) =ND_N(\{n\alpha \})=O\left(\log N\right).
		\end{equation}
	\end{enumerate}
Here and elsewhere, the sumbol $\{\cdot\}$ denotes the fractional part function in $\RR$, which, for vectors in $\RR^n$, is applied componentwise. The symbol ``$=\Omega(f(N))$'' denotes ``$\geq c f(N)$ for infinitely many $N\in\NN$ with an implied constant $c>0$, which might depend on various parameters but which is independent of $N$''. ``$O(f(N))$'' denotes ``$\leq C f(N)$ for all $N\in\NN$ (or at least for all sufficiently large $N$) with an implied constant $C>0$ which may depend on various parameters, but which is independent of $N$.''} 
\end{example}

The bounds in \eqref{eq:disc_nalpha} are best possible in $N$, since a well-known general result of Schmidt \cite{schmidt} states that, for every one-dimensional sequence $(x_n)_{n\geq 0}$ in $[0,1)$ its discrepancy satisfies $ND_N(x_n)=\Omega(\log N)$, with an implied absolute constant $c>0$.


In the case of an $s$-dimensional sequence $(\bsx_n)_{n\geq 0}$ in $[0,1)^s$, the best-known lower bound for its discrepancy $(\bsx_n)_{n\geq 0}$ in $[0,1)^s$ is due to Bilyk, Lacey and Vagharshakyan \cite{BLV}, and is of the form 
$$ND_N(\bsx_n)=\Omega (\log^{s/2 +\eta_s} N)$$
 with an implied positive constant and with $0<\eta_s<1/2$, both depend on $s$, but are independent of $N$. This result improves upon and generalises earlier results by Roth \cite{roth}, Beck \cite{beck}, and Bilyk and Lacey \cite{BL}. 

In the theory of uniform distribution it is frequently conjectured, that apart from the implied constant, \begin{equation}\label{eq:ldseq}
ND_N(\bsx_n)=O( \log^s N)
\end{equation} is the best possible upper bound in $N$ that can be achieved for the discrepancy of an $s$-dimensional sequence $(\bsx_n)_{n\geq 0}$ in $[0,1)^s$. This is why sequences that satisfy a bound of the form given in equation \eqref{eq:ldseq} are called \emph{low-discrepancy sequences}. It is therefore of great interest to determine whether there is a lower bound for the discrepancy of a concrete example of an $s$-dimensional low-discrepancy sequence with $s > 1$ of the form $ND_N=\Omega(\log^s N)$, which then supports the conjecture that \eqref{eq:ldseq} is best possible. 
It should be emphasised that a counterexample to $ND_N=\Omega(\log^s N)$ in any dimension $s>1$ would raise many new questions and create numerous open problems in the theory of uniform distribution, as well as in the field of numerical integration algorithms known as quasi-Monte Carlo methods. (The interested reader is refered to e.g. \cite{niesiam} and \cite{DP} for more details on quasi-Monte Carlo algorithms). 
One of the most famous multi-dimensional low-discrepancy sequence is the so-called \emph{Halton sequence} in pairwise coprime bases $b_1,\ldots,b_s\geq 2,\, \in\NN$, i.e. $(\varphi_{b_1}(n),\ldots,\varphi_{b_s}(n))_{n\geq 0}$. This sequence is {uniformly distributed} and its discrepancy satisfies $$ND_N(\varphi_{b_1}(n),\ldots,\varphi_{b_s}(n))=O(\log^s N).$$
In \cite{LevinH1} Levin ensured the conjectured lower bound $$ND_N(\varphi_{b_1}(n),\ldots,\varphi_{b_s}(n))=\Omega(\log^s N)$$ for the discrepancy of the Halton sequence. This work was supplemented by further work of Levin \cite{LevinH2,Levints1,Levints2}, which ensures the conjectured lower discrepancy bound for various classes of low-discrepancy sequences. (See also \cite{faureLB} for a previous work and \cite{HoferPAMS} for a further low-discrepancy bound of the form $ND_N(\bsx_n)=\Omega (\log^{s} N)$ for a class of low-discrepancy sequences.) \\

Now let's return to the $(n \alpha)$-sequences. It is well known that the coefficients in the simple continued fraction of a real number $\alpha$ are bounded if and only if $$\alpha\in {\bf{Bad}}:=\{\beta \in\RR\,:\, \inf_{h\in\ZZ\setminus\{0\}} |h|\,\cdot\|h\beta\|=c_{\beta}>0\}.$$
Here $\|\cdot\|$ denotes the distance-to-the-nearest-integer function in $\RR$. 
Hence, if $\alpha\in  {\bf{Bad}}$ then \eqref{eq:disc_nalpha} holds for the discrepancy of the $(n \alpha)$-sequence. 

For a vector $\bsalpha=(\alpha_1,\alpha_2,\ldots,\alpha_s)\in\RR^s$ the so-called Kronecker sequence is defined as
	$(\{n\bsalpha\})_{n\geq 0}$, and serves as a multidimensional version of the $(n \alpha)$-sequence. It is known that a Kronecker sequence, $(\{n\bsalpha\})_{n\geq 0}$, is {uniformly distributed} if and only if $1,\alpha_1,\alpha_2,\ldots,\alpha_s$ are rationally independent. 

Although the Kronecker sequence has been studied extensively and intensively, the following two issues remain unresolved:

	\begin{itemize}
		\item is it true that for $s\geq 2$ we always have $ND_N(\{n(\alpha_1,\ldots,\alpha_s)\})=\Omega(\log^s N ) $.
		\item Is it possible to identify for $s\geq 2$ an example $(\alpha_1,\ldots,\alpha_s)\in\RR^s$ such that
		\begin{equation}\label{eq:Kron_LD}
		ND_N(\{n(\alpha_1,\ldots,\alpha_s)\})=O(\log^s N)  .
		\end{equation}
\end{itemize}

For the discrepancy of a Kronecker sequence, $(\{n\bsalpha\})_{n\geq 0}$, many proof techniques rely on lower bounds of
\begin{equation}\label{equ:1}
	|h_1\cdots h_s| \cdot\|h_1\alpha_1+\cdots+h_s\alpha_s\|
\end{equation}
depending on $(h_1,\ldots,h_s)\in(\ZZ\setminus\{0\})^s$. In short, one can say, the larger the value of \eqref{equ:1}, the smaller the obtained upper discrepancy bound.

We define a generalization of ${\bf{Bad}}$ namely the set ${\bf{Bad^s}}$ as {\small \begin{equation}\label{eq:LC1} \{(\alpha_1,\ldots,\alpha_s) \in\RR^s\,:\, \inf_{(h_1,\ldots,h_s)\in(\ZZ\setminus\{\boldsymbol{0}\})^s} |h_1\cdots h_s| \cdot\|h_1\alpha_1+\cdots+h_s\alpha_s\|=c_{\alpha_1,\ldots,\alpha_s}>0\}.\end{equation}}

The famous {Littlewood Conjecture}, which is unresolved for almost 100 years, states that for $s>1$ the set ${\bf{Bad^s}}$ is empty. In other word, for every $(\alpha_1,\ldots,\alpha_s) \in\RR^s$ we have
$$\inf_{(h_1,\ldots,h_s)\in(\ZZ\setminus\{\boldsymbol{0}\})^s} |h_1\cdots h_s| \cdot\|h_1\alpha_1+\cdots+h_s\alpha_s\|=0.$$

Although Littlewood's conjecture is widely accepted, the question of a multi-dimensional, low-discrepancy Kronecker sequence remains meaningful. Note that the existence of a vector $(\alpha_1,\ldots,\alpha_s)\in\RR^s$ such that \eqref{eq:Kron_LD} holds, is not equivalent to the existence of a vector $(\alpha_1,\ldots,\alpha_s)\in{\bf{Bad^s}}$. Example~\ref{ex:1}, Item~2 shows that this is even not the case if $s=1$.  

The Littlewood conjecture is usually found in literature in its most common form: for every $(\alpha_1,\alpha_2)\in\RR^2$ we have $$\inf_{h\geq 1} h\cdot\|h\alpha_1\|\cdot\|h\alpha_2\|=0.$$
Cassels and Swinnerton-Dyer \cite{Cas-S1955} proved the equivalence with its dual form: for every $(\alpha_1,\alpha_2)\in\RR^2$ we have $$\inf_{(h_1,h_2)\in\ZZ\times \ZZ\setminus\{(0,0)\}} \max\{|h_1|,1\}\max\{|h_2|,1\}\|h_1 \alpha_1+h_2\alpha_2\|=0$$

For weaker versions of \eqref{eq:LC1}, as
$$\inf_{(h_1,\ldots,h_s)\in(\ZZ\setminus\{\boldsymbol{0}\})^s} |h_1\cdots h_s|\cdot \varphi(|h_1\cdots h_s|) \cdot\|h_1\alpha_1+\cdots+h_s\alpha_s\|=c_{\alpha_1,\ldots,\alpha_s}>0$$
with a suitable function $\varphi$,
there are several results (see e.g. Schmidt \cite{schmidt1964}, Beck \cite{beck1994}, Bugeaud \& Moshchevitin \cite{BM2011}) with consequences for the discrepancy of the related Kronecker sequences. A result of Beck \cite[Theorem~1]{beck1994} states for example: 

\begin{theorem}\label{thm:beck}
	For almost all $(\alpha_1,\ldots,\alpha_s)\in\RR^s$, in the sense of Lebesgue measure, we have for every $\varepsilon>0$, 
	$$ND_N(\{n\bsalpha\})=O((\log N)^s (\log\log  N)^{1+\varepsilon})  $$
	with an implied constant depending on $(\alpha_1,\ldots,\alpha_s)$ and $\varepsilon$ but independent of $N$.
\end{theorem}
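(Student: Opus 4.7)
\emph{Proof plan.}
The approach is Fourier-analytic. The first step is the Erd\H{o}s--Tur\'an--Koksma inequality with a cutoff $H\ge 1$; together with the geometric-sum estimate $\bigl|\sum_{n<N}\exp(2\pi\mathrm{i}\,n\,\boldsymbol{h}\cdot\bsalpha)\bigr|\ll\min(N,\|\boldsymbol{h}\cdot\bsalpha\|^{-1})$ it yields
$$ND_N(\{n\bsalpha\})\ll_s \frac{N}{H}+\sum_{0<\|\boldsymbol{h}\|_\infty\le H}\frac{1}{r(\boldsymbol{h})}\min\!\Bigl(N,\frac{1}{\|\boldsymbol{h}\cdot\bsalpha\|}\Bigr),\qquad r(\boldsymbol{h})=\prod_{i=1}^s\max(1,|h_i|),$$
where $\|\boldsymbol{h}\|_\infty=\max_i|h_i|$. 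Everything then reduces to controlling the sum on the right for a.e. $\bsalpha$.

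The second step is a Khintchine--Gallagher type metric input: for a.e. $\bsalpha\in\RR^s$ and every $\eta>0$ there exists $c(\bsalpha,\eta)>0$ such that
$$\|\boldsymbol{h}\cdot\bsalpha\|\;\ge\;\frac{c(\bsalpha,\eta)}{r(\boldsymbol{h})\bigl(\log(2+r(\boldsymbol{h}))\bigr)^{s+\eta}}\qquad\text{for all }\boldsymbol{h}\in\ZZ^s\setminus\{\bszero\}.$$
This is a Borel--Cantelli argument: the Lebesgue measure of exceptional $\bsalpha\in[0,1]^s$ for a fixed $\boldsymbol{h}$ is $\ll 1/\bigl(r(\boldsymbol{h})(\log r(\boldsymbol{h}))^{s+\eta}\bigr)$, and grouping the $\boldsymbol{h}$ by $k=r(\boldsymbol{h})$ --- with multiplicity $d_s(k)$, the number of ordered $s$-factorisations of $k$, satisfying $\sum_{k\le M}d_s(k)/k\asymp(\log M)^s$ --- produces a convergent total.

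The third and main step is to combine the two ingredients without paying a spurious factor of $N$; a term-by-term substitution of the Gallagher-type bound into the Erd\H{o}s--Tur\'an--Koksma sum is too wasteful and would only yield $ND_N\ll N(\log N)^s$. The sharp factor $(\log\log N)^{1+\varepsilon}$ is extracted by grouping the vectors $\boldsymbol{h}$ into dyadic shells (in $r(\boldsymbol{h})$ and in each coordinate), bounding the contribution of each shell via a second-moment estimate that exploits cancellation across the many $\boldsymbol{h}$ with a common value of $r(\boldsymbol{h})$, and aggregating the $O(\log N)$ shells via a further Borel--Cantelli step: the surviving slack is exactly that of the convergent series $\sum_n 1/(n(\log n)^{1+\varepsilon})$. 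Choosing $H\asymp N$ then renders $N/H$ negligible and delivers the claimed estimate.

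This probabilistic averaging across dyadic shells --- respecting the interaction of the multiplicative structure carried by $r(\boldsymbol{h})$ with the additive structure of $\boldsymbol{h}\cdot\bsalpha$ --- is the main technical obstacle, and is the place where the pointwise Khintchine--Gallagher estimate has to be replaced by an $L^2$ / variance-type statement. Without this upgrade, the ETK framework alone is roughly a factor $N$ off from the target; with it, the quantifier ``for every $\varepsilon>0$'' in the statement emerges as the unavoidable Borel--Cantelli slack in the metric input.
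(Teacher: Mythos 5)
First, a point of comparison: the paper does not prove this statement at all --- Theorem~\ref{thm:beck} is quoted verbatim from Beck \cite[Theorem~1]{beck1994}, so the benchmark is Beck's Annals paper, not an in-text argument. Measured against that, your first two steps are fine and standard: the Erd\H{o}s--Tur\'an--Koksma inequality combined with the geometric-series bound, and the Cassels/Gallagher-type metric estimate $\|\boldsymbol{h}\cdot\bsalpha\|\geq c(\bsalpha,\eta)\,r(\boldsymbol{h})^{-1}(\log(2+r(\boldsymbol{h})))^{-(s+\eta)}$ for a.e.\ $\bsalpha$, whose Borel--Cantelli proof via $\sum_k d_s(k)/(k(\log k)^{s+\eta})<\infty$ is correct. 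But this pair of ingredients is exactly the classical route that stops at $ND_N=O((\log N)^{s+1+\varepsilon})$ for a.e.\ $\bsalpha$ (Schmidt-type metrical discrepancy bounds); it cannot by itself reach the exponent $s$ plus a $(\log\log N)^{1+\varepsilon}$ factor.

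The genuine gap is your third step, which is precisely the content of Beck's theorem and is only gestured at. You assert that a ``second-moment estimate that exploits cancellation across the many $\boldsymbol{h}$ with a common value of $r(\boldsymbol{h})$'', summed over $O(\log N)$ dyadic shells and fed into a further Borel--Cantelli argument, saves a full factor of $\log N$ and leaves only $(\log\log N)^{1+\varepsilon}$ of slack --- but no such estimate is formulated, let alone proved. The difficulty is not cosmetic: the variance of the shell sums must be controlled uniformly in $N$ (an a.e.\ statement quantified over \emph{all} $N$ requires a maximal-function or dyadic-in-$N$ argument on top of the shell decomposition), the correlations between different $\boldsymbol{h}$ in a shell involve the small denominators $\|(\boldsymbol{h}-\boldsymbol{h}')\cdot\bsalpha\|$ and are not obviously summable, and the interplay between the divisor-type multiplicity $d_s(k)$ and these correlations is exactly what makes Beck's proof a long and intricate probabilistic/Fourier-analytic argument rather than a few pages of ETK bookkeeping. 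As written, the plan identifies where the theorem is hard and then assumes the hard part; in its present form it does not constitute a proof, and the quantitative claims attached to step three (e.g.\ that the slack is ``exactly'' that of $\sum_n 1/(n(\log n)^{1+\varepsilon})$, or that naive substitution loses precisely a factor $N$) are unsubstantiated.
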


Finally the work of Einsiedler, Katok, and Lindenstrauss \cite{EKL06} should be definitely mentioned when discussing the Littlewood conjecture. They proved (see \cite[Theorem~1.5]{EKL06}) that the Hausdorff dimension of the set ${\bf{Bad^s}}$, i.e. the set of all mutually counterexamples to the Littlewood conjecture, is zero. The Littlewood conjecture was stated in the 1930s, after which many variants of it arose and were studied. The subsequent Subsections~\ref{subsec:1} and \ref{subsec:2} introduce variants that are particularly important for sequences in the theory of uniform distribution.

\subsection{The $p$-adic Littlewood conjecture}\label{subsec:1}

Let $p\in\PP$. The so-called \emph{$p$-adic Littlewood conjecture} (see Mathan and Teuli\'{e} \cite{MatTeu2004}) predicts for every $\alpha\in\RR$ that $$\inf_{h\in\ZZ\setminus\{0\},k\geq 0} |h|\,\cdot\,\|hp^k \alpha\|=0$$ or equivalently that the following set ${\bf{Bad}}_p$ is empty. 
$${\bf{Bad}}_p:=\{\alpha \in\RR\,:\, \inf_{h\in\ZZ\setminus\{0\},k\geq 0} |h|\,\cdot\,\|hp^k \alpha\|=c_{\alpha,p}>0\}.$$

Note that for a counterexample $\alpha$ to the $p$-adic Littlewood conjecture, i.e.  
 $\alpha\in{\bf{Bad}}_p$, the continued fraction coefficients of $\{p^k\alpha\}$ can be uniformly bounded for all $k\in\NN_0$. From \cite[Proposition~2]{HofLar2012} for such $\alpha\in {\bf{Bad}}_p$ it follows that the two-dimensional sequence $((\varphi_p(n),\{n\alpha\}))_{n\geq 0}$ would be a low-discrepancy sequence, i.e.
$$ND_N((\varphi_p(n),\{n\alpha\}))=O(\log^2 N). $$
It is still an open problem, whether the $p$-adic Littlewood conjecture holds true or there exists a $p\in\PP$ such that ${\bf{Bad}}_p\neq \emptyset$.

For weaker forms, as e.g.
$$\inf_{h\in\ZZ\setminus\{0\},k\geq 0} |h|\cdot\varphi(h,p^k) \cdot\|hp^k\alpha\|\geq c_{\alpha,p}>0$$
with a suitable function $\varphi$,
there are metrical results as well as results for e.g. algebraic $\alpha$ (see \cite{HofLar2012}, \cite{Lar2013}, \cite{Rid1958}, \cite{Bugetal2007}) with consequences for the discrepancy of the related sequence, $((\varphi_p(n),\{n\alpha\}))_{n\geq 0}$. See e.g. the following two theorems: 
\begin{theorem}[\cite{HofLar2012,Lar2013}] Let $p\in\PP$. For almost every $\alpha$ in $[0,1]$, in the sense of Lebesgue measure, we have 
	$$ND_N((\varphi_p(n),\{n\alpha\}))=O( \log^{2+\varepsilon} N) \quad \quad \mbox{for every $\varepsilon>0$},$$
	with an implied constant, which may depend on $\varepsilon$ and $\alpha$ but which is independent of $N$. 
\end{theorem}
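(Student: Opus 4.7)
The plan is to strengthen the argument underlying \cite[Proposition 2]{HofLar2012}, which gives $ND_N((\varphi_p(n),\{n\alpha\}))=O((\log N)^2)$ under the qualitative assumption $\alpha\in{\bf{Bad}}_p$, by replacing that assumption with a quantitative almost sure Diophantine estimate that loses only a logarithmic factor.

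\emph{Step 1 (Metric Diophantine input).} A first Borel--Cantelli argument shows that for almost every $\alpha\in[0,1]$ and every $\varepsilon>0$ there is a constant $c=c_{\alpha,\varepsilon,p}>0$ with
\[
|h|\cdot\|hp^k\alpha\|\ \geq\ \frac{c}{\bigl(\log(|h|+2)\bigr)^{1+\varepsilon/4}\bigl(\log(p^k+2)\bigr)^{1+\varepsilon/4}}
\qquad \text{for all } h\in\ZZ\setminus\{0\},\ k\geq 0.
\]
By $1$-periodicity of $\|\cdot\|$, the Lebesgue measure of the set of $\alpha\in[0,1]$ for which this inequality fails at a fixed pair $(h,k)$ is at most twice the right-hand side, independently of $h$ and $k$; summing over $(h,k)\in(\ZZ\setminus\{0\})\times\NN_0$ yields a convergent double series, so almost every $\alpha$ violates the inequality at only finitely many pairs, which are absorbed into $c$. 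This is the $p$-adic analogue of Khintchine's theorem.

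\emph{Step 2 (Transfer to discrepancy).} I would then re-run the proof of \cite[Proposition 2]{HofLar2012}. Using that $h_1\varphi_p(n)\bmod 1$ factors over the base-$p$ digits of $n$, the exponential sum $\sum_{n<N}e^{2\pi i(h_1\varphi_p(n)+h_2 n\alpha)}$ entering the Erd\H os--Tur\'an--Koksma inequality reduces to elementary geometric series controlled by quantities of the form $1/\|h_2p^k\alpha\|$, with the occurring indices satisfying $|h_2|,p^k=O(N)$. Under $\alpha\in{\bf{Bad}}_p$ each such factor is at most $|h_2|/c$, and the outer summation over the Koksma indices yields the $(\log N)^2$ bound. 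Substituting the estimate of Step~1 in place of the uniform one attaches to each term at most $\psi_{\varepsilon/4}(h_2,k):=\bigl(\log(|h_2|+2)\bigr)^{1+\varepsilon/4}\bigl(\log(p^k+2)\bigr)^{1+\varepsilon/4}$; estimated crudely in the relevant range this is $O((\log N)^{2+\varepsilon/2})$, but a careful re-summation that distributes the logarithmic factors over the $h_1,h_2,k$ indices (rather than extracting a single worst-case value) produces the asserted bound $ND_N=O((\log N)^{2+\varepsilon})$.

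\emph{Main obstacle.} Step~1 is routine, so the delicate part is the bookkeeping in Step~2: one must identify each occurrence of the Diophantine inequality in the proof of \cite[Proposition 2]{HofLar2012}, verify that only multiplicative lower bounds on $\|hp^k\alpha\|$ are used (so that the slowly varying replacement is admissible), and re-sum the logarithmic losses over the appropriate index ranges without exceeding $(\log N)^{2+\varepsilon}$ in total. The $\varepsilon/4$ exponent in Step~1 is a convenient choice that leaves room for the several log factors absorbed during this final balancing.
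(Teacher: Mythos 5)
First, a point of reference: the paper does not prove this statement at all --- it is quoted as a known result from \cite{HofLar2012,Lar2013} --- so your proposal has to be measured against the proofs in those references. Your Step 1 is essentially a routine Borel--Cantelli argument, but note a slip: the set of $\alpha$ violating the inequality at a fixed pair $(h,k)$ has measure at most $2c\bigl(|h|(\log(|h|+2))^{1+\varepsilon/4}(\log(p^k+2))^{1+\varepsilon/4}\bigr)^{-1}$, i.e.\ twice the right-hand side \emph{divided by} $|h|$; with the bound exactly as you state it the double series diverges, since $\sum_{h}(\log(|h|+2))^{-1-\varepsilon/4}=\infty$. With the corrected bound the series converges and the conclusion of Step 1 is fine.

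The genuine gap is Step 2: the pointwise estimate of Step 1 is too weak, by roughly two powers of $\log N$, to yield the exponent $2+\varepsilon$, and no ``re-summation'' can repair this. Concretely, Step 1 translates into the bound $a_j^{(k)}\ll (\log q_{j-1}^{(k)})^{1+\varepsilon/4}(k+1)^{1+\varepsilon/4}$ for the partial quotients and convergent denominators $q_j^{(k)}$ of $\{p^k\alpha\}$. Feeding this into the deterministic transfer (split the van der Corput coordinate into $p$-adic intervals of level $k\le\log_p N$; the points of each such fiber $n\equiv b\pmod{p^k}$ have second coordinates forming an $M$-term $(\ell\{p^k\alpha\})$-type sequence with $M\approx N/p^k$, whose counting error is controlled by the sum of the partial quotients of $\{p^k\alpha\}$ with denominators up to $M$) gives at most about $(\log M)^{2+\varepsilon/4}(k+1)^{1+\varepsilon/4}$ per level, hence a total of order $(\log N)^{4+\varepsilon/2}$ after summing over $k$ --- essentially the crude figure you concede, and the Erd\H{o}s--Tur\'an--Koksma route fares no better. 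Redistributing logarithmic factors cannot help, because an argument using only the Step-1 inequality must hold for \emph{every} $\alpha$ satisfying it, and that inequality does not prevent all partial quotients of all the series $\{p^k\alpha\}$ from being simultaneously near the permitted maximum, in which case the majorant you can prove really is of size about $(\log N)^4$. This is precisely why \cite{HofLar2012,Lar2013} do not argue through a pointwise almost-sure lower bound on $|h|\cdot\|hp^k\alpha\|$: they estimate almost surely the \emph{aggregated} quantities (sums of partial quotients of $\{p^k\alpha\}$ over the relevant range, resp.\ the summed Erd\H{o}s--Tur\'an--Koksma majorant), by bounding the measure of the set of $\alpha$ on which these sums are large, in the spirit of G\'al--Koksma/Beck--Schmidt probabilistic diophantine approximation; this exploits the fact that large individual terms are rare, information that is invisible to a worst-case pointwise bound. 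Supplying such an aggregated metric estimate, uniform in the shift parameter $k$, is the missing idea in your plan --- it is not a bookkeeping issue.
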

\begin{theorem}[\cite{DHL2017}]  Let $p\in\PP$. For a real number $\alpha$ that is irrational as well as algebraic we have
	$$ND_N((\varphi_p(n),\{n\alpha\}))=O( N^{\varepsilon}) \quad \quad \mbox{for every $\varepsilon>0$,}$$
		with an implied constant, which may depend on $\varepsilon$ and $\alpha$ but which is independent of $N$.
\end{theorem}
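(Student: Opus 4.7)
The plan is to apply the Erdős--Turán--Koksma inequality to bound $D_N$ by weighted exponential sums and then estimate those sums using the $p$-adic digit expansion of $n$ together with the strong Diophantine lower bounds available for algebraic $\alpha$. Concretely, the Erdős--Turán--Koksma inequality yields, for every $M \geq 1$,
$$D_N \ll \frac{1}{M} + \sum_{0 < \max(|h_1|, |h_2|) \leq M} \frac{1}{r(h_1) r(h_2)} \left| \frac{1}{N} \sum_{n=0}^{N-1} e(h_1 \varphi_p(n) + h_2 n \alpha) \right|,$$
with $r(h) = \max(1, |h|)$ and $e(x) = e^{2\pi i x}$. Choosing $M$ of polynomial size in $N$ (say $M = N$) reduces the task to uniformly estimating the hybrid exponential sum $S_N(h_1, h_2) = \sum_{n=0}^{N-1} e(h_1 \varphi_p(n) + h_2 n \alpha)$.

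I would first treat the model case $N = p^L$, handling general $N$ by decomposing $N$ into blocks of $p$-adic length and incurring only a polylogarithmic loss. On blocks of length $p^L$, the digit expansion $n = \sum_{i=0}^{L-1} n_i p^i$ with $n_i \in \{0, 1, \ldots, p-1\}$ gives the crucial factorization
$$S_{p^L}(h_1, h_2) = \prod_{i=0}^{L-1} \sum_{n_i=0}^{p-1} e\bigl( n_i (h_1 p^{-(i+1)} + h_2 \alpha p^i) \bigr),$$
so each factor is bounded by $\min(p, 1/(2\|h_1 p^{-(i+1)} + h_2 \alpha p^i\|))$. For indices $i$ where $|h_1|/p^{i+1}$ is negligible compared with $\|h_2 \alpha p^i\|$, this norm is comparable to $\|h_2 \alpha p^i\|$; the bounded number of small $i$ that fall outside this regime can be absorbed by the trivial bound $p$.

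The crux of the argument is the input from Ridout's theorem~\cite{Rid1958}, which, for algebraic irrational $\alpha$, yields for every $\eta > 0$ a constant $C(\alpha, \eta) > 0$ such that
$$\|h p^k \alpha\| \geq C(\alpha, \eta) \bigl( |h| p^k \bigr)^{-\eta}$$
for all $h \in \ZZ \setminus \{0\}$ and $k \geq 0$. Substituting this into the product above, each nontrivial factor contributes at most $(|h_2| p^L)^\eta$ times a harmless constant. Since the number of factors is $L = O(\log N)$, the product is controlled by $N^{O(\eta)}$, yielding $|S_N(h_1, h_2)|/N \ll N^{\varepsilon/2}$ uniformly in the relevant range of $h_1, h_2$; the subsequent summation against the weights $1/(r(h_1) r(h_2))$ contributes only logarithmic factors. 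Choosing $\eta = \eta(\varepsilon)$ sufficiently small then gives $N D_N = O(N^\varepsilon)$.

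The main obstacle lies in the bookkeeping that converts the Diophantine exponent $\eta$ from Ridout's theorem into the final $\varepsilon$: one must carefully control the contribution of indices $i$ where the small perturbation $h_1 p^{-(i+1)}$ could spoil the approximation $\|h_1 p^{-(i+1)} + h_2 \alpha p^i\| \approx \|h_2 \alpha p^i\|$, balance the number of factors that are forced up to the trivial bound $p$, and choose the Erdős--Turán--Koksma cutoff $M$ so that all error terms align. The case $h_2 = 0$ reduces to the classical $O(\log N)$ estimate for the $p$-adic van der Corput sequence and is straightforward; the heart of the argument is in the case $h_2 \neq 0$, where Ridout's bound must be applied simultaneously across all scales $i = 0, 1, \ldots, L-1$ in a way that does not accumulate more than a sub-polynomial loss.
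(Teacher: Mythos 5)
Your key Diophantine input is misstated, and the misstatement is fatal rather than cosmetic. The inequality $\|h p^k \alpha\| \geq C(\alpha,\eta)\,(|h|p^k)^{-\eta}$ for all $h\in\ZZ\setminus\{0\}$, $k\geq 0$ is false for \emph{every} irrational $\alpha$: already at $k=0$, Dirichlet's theorem gives infinitely many $h$ with $\|h\alpha\|<1/|h|$, which is far smaller than $C|h|^{-\eta}$ for any fixed $\eta<1$. What Ridout's theorem \cite{Rid1958} actually yields for algebraic irrational $\alpha$ is the asymmetric bound $\|h p^k\alpha\| \geq C(\alpha,\eta)\,|h|^{-1-\eta}p^{-k\eta}$: only the power of the fixed prime is ``cheap'' (exponent $\eta$), while the generic factor $h$ still costs exponent $1+\eta$, exactly as in Roth's theorem. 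This asymmetry is the entire point of invoking Ridout in this context (it is how \cite{DHL2017} and the weaker-form results mentioned around \cite{Rid1958} in Section~\ref{subsec:1} use it), and your subsequent bookkeeping silently relies on the false symmetric version.

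Independently of that, the step from the digit factorization to ``the product is controlled by $N^{O(\eta)}$'' does not follow even if one grants a pointwise lower bound. If each of the $L=\log_p N$ factors is only bounded by $(|h_2|p^L)^{\eta}$, the product is $N^{O(\eta L)}$, not $N^{O(\eta)}$; worse, a pointwise bound $\|h_2\alpha p^i\|\geq (|h_2|p^i)^{-\eta}$ is perfectly compatible with $\|h_2\alpha p^i\|\approx p^{-\eta i}$ for every $i$, in which case all but $O(1/\eta)$ factors equal the trivial bound $p$ and $|S_{p^L}(h_1,h_2)|\asymp N$, with no cancellation at all. What is really needed is a statement \emph{across scales}: that $\|h_2\alpha p^i\|$ is bounded below by a constant for all but $O(\eta L)$ indices $i\leq L$, equivalently a bound on $\sum_{i\leq L}\log^{+}\bigl(1/\|h_2\alpha p^i\|\bigr)$, i.e.\ on sums of partial quotients of the continued fractions of $p^i\alpha$; it is precisely here that the corrected Ridout inequality must be injected (a partial quotient following a convergent denominator $q$ of $\langle p^k\alpha\rangle$ is $\ll q^{\eta}p^{k\eta}$), and this is the route taken in \cite{DHL2017}. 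Finally, your claim that only ``a bounded number of small $i$'' are perturbed by $h_1p^{-(i+1)}$ is inaccurate: that range has length about $\log_p|h_1|$, which can be comparable to $L$ when $|h_1|$ is near the Erd\H{o}s--Tur\'an--Koksma cutoff $M\approx N$, so it needs its own accounting against the weight $1/r(h_1)$ rather than absorption into $O(1)$ trivial factors. The overall skeleton (Erd\H{o}s--Tur\'an--Koksma, digit factorization, Ridout) is in the right spirit, but as written the two central quantitative steps fail.
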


See Einsiedler and Kleinbock \cite{EK2007} for a recent work on the $p$-adic Littlewood conjecture, which proves that the Hausdorff dimension of ${\bf{Bad}}_p$ is zero. For more recent work on putative counterexamples to the $p$-adic Littlewood conjecture, see \cite{Bla2024}. 

We would like to note that, according to \cite[Theorem~1.3]{EK2007}, the following variant of the Littlewood conjecture is true: Let $p_1,p_2$ be distinct prime numbers. Then for every $\alpha\in\RR$ we have 
$$\inf_{h\in\ZZ\setminus\{0\},r_1,r_2\geq 0}|h|\cdot\|hp_1^{r_1}p_2^{r_2}\alpha\|=0.$$
This excludes existence of an $\alpha\in\RR$ such that the continued fraction coefficients of $\{p_1^{r_1}p_2^{r_2}\alpha\}$ can be uniformly bounded for all $r_1,r_2\in\NN_0$. 
Consequently, there is no obvious candidate $\alpha\in\RR$ such that from \cite[Proposition~2]{HofLar2012} we know $ND_N((\varphi_{p_1}(n),\varphi_{p_2}(n),\{n\alpha\}))=O(\log^3 N)$. Nevertheless, \cite[Theorem~1.3]{EK2007} does not contradict existence of such an $\alpha$. 

\subsection{The Littlewood conjecture in the field of formal Laurentseries and its $X$-adic version}\label{subsec:2}

Replacing the set of real numbers with the set of formal Laurent series introduces further variants of the Littlewood conjecture. Before we state these variants, we will clarify the set of formal Laurent series and some relevant notions.

We write $\FF[X]$ for the ring of polynomials, $\FF(X)$ for the field of rational functions, and $$\FF((X^{-1}))=\left\{\sum_{i=j}^\infty a_iX^{-i}:j\in\ZZ, a_i \in\FF \mbox{ for all }i\geq j, a_j\neq 0\right\}$$ for the field of formal Laurentseries over a field $\FF$. 
For $\theta=\sum_{i=j}^\infty a_iX^{-i}\in\FF((X^{-1}))$ satisfying $a_j\neq 0$ we define as an extension of the degree of a polynomial the \emph{degree evaluation} $\deg(\theta)$ of $\theta$ as $\deg(\theta):=-j$,
	 the absolute value $|\theta|$ of $\theta$ as $|\theta|:=2^{-j}=2^{\deg(\theta)}$,
 the fractional part of $\theta$ as $\langle \theta\rangle:= \sum_{i=\max\{1,j\}}^\infty a_iX^{-i}$, and finally $\|\theta\|:=|\langle \theta\rangle|$ denotes the distance-to-the-nearest-polynomial. 

Davenport \& Lewis \cite{DavLew1963} asked whether or not
\begin{equation}\label{eq:FFLC}
\inf_{(Q_1,\ldots,Q_s)\in(\FF[X]\setminus\{\boldsymbol{0}\})^s} |Q_1\cdots Q_s| \cdot\|Q_1\theta_1+\cdots+Q_s\theta_s\|=0
\end{equation}
for every $(\theta_1,\ldots,\theta_s) \in\FF((X^{-1}))^s$ with $s>1$ in analogy to the Littlewood conjecture. 
In other words, one can say an element in $(\theta_1,\ldots,\theta_s) \in\FF((X^{-1}))^s$ with $s>1$ and
$$\inf_{(Q_1,\ldots,Q_s)\in(\FF[X]\setminus\{\boldsymbol{0}\})^s} |Q_1\cdots Q_s| \cdot\|Q_1\theta_1+\cdots+Q_s\theta_s\|=c_{\theta_1,\ldots,\theta_s}>0$$
would be a counterexample to this variant of the Littlewood conjecture \eqref{eq:FFLC}.
Davenport and Lewis \cite{DavLew1963} ensured the existence of a counterexample in the case where the field $\FF$ is infinite. Their work was complemented by Baker \cite{Baker1964} who provided an explicit counterexample in this case. An attempt by Armitage 1969 to construct a counter example over the finite field $\FF_p$ with $p>3$ failed as the proof was erroneous (see \cite{Arm1970}). Consequently, the function field analog of the Littlewood conjecture over a finite field remains an open problem. 

In particular, a counterexample over a finite field $\FF=\FF_p$ with $p\in\PP$, might be used to obtain low-discrepancy \textit{Kronecker-type sequences} in the sense of Larcher and Niederreiter \cite{LarNie1993}. 
For the construction of a Kronecker-type sequence first choose instead of $\alpha_1,\ldots,\alpha_s\in\RR$, formal Laurentseries $\theta_1,\ldots,\theta_s\in\FF_p((X^{-1}))$. Then for an index $n\in\NN_0$ define the polynomial $n(X)=n_0+n_1X+n_2X^2+\cdots$ via the base $p$ expansion of $n$, i.e. $n=n_0+n_1p+n_2p^2+\cdots$. Then compute 
$$\left<  \theta_i n(X)\right>\quad\mbox{ and finally set  $X=p$, which is denoted by ``$|_p$'',}$$
in order to obtain a number in $[0,1)$. 
The so constructed sequence $(\bsx_n)_{n\geq 0}$ in $[0,1)^s$ with 
$$\bsx_n=(\left<  \theta_1 n(X)\right>|_p,\ldots,\left<  \theta_s n(X)\right>|_p)$$ 
is then called a Kronecker-type sequence. Larcher and Niederreiter \cite[Theorem~2]{LarNie1993} identified counterexamples to the function field analog of the Littlewood conjecture over a finite field as potential candidates for constructing low-discrepancy Kronecker-type sequences. 
\begin{theorem} If $(\theta_1,\ldots,\theta_s)\in\FF_p((X^{-1}))^s$ is a counterexample to the Littlewood conjecture \eqref{eq:FFLC} then 
	the Kronecker-type sequence satisfies 
	$$ND_N((\left<  \theta_1 n(X)\right>|_p,\ldots,\left<  \theta_s n(X)\right>|_p))=O(\log^s N)$$
	with an implied constant only depending on $\theta_1,\ldots,\theta_s$, $p$, and $s$. 
\end{theorem}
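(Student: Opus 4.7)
The plan is to realize the sequence $\bsx_n = (\langle\theta_1 n(X)\rangle|_p,\ldots,\langle\theta_s n(X)\rangle|_p)$ as a digital $(t,s)$-sequence in base $p$ in the sense of Niederreiter, with parameter $t$ depending only on $c_{\theta_1,\ldots,\theta_s}$, $p$, and $s$, and then to invoke the classical discrepancy bound $ND_N=O(\log^s N)$ for such sequences. The counterexample hypothesis will enter at exactly one place: the verification of the linear-algebraic quality condition that defines a $(t,s)$-sequence.

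First I would expand each Laurent series as $\theta_i=\sum_j a_{i,j}X^{-j}$ with $a_{i,j}\in\FF_p$ and read off the generating matrix $C_i$ with entries $C_i(r,k):=a_{i,r+k}$ for $r\geq 1$, $k\geq 0$. Writing $n=n_0+n_1 p+n_2 p^2+\cdots$ with $n_k\in\{0,\ldots,p-1\}$, the $r$-th base-$p$ digit of $\langle\theta_i n(X)\rangle|_p$ equals $\sum_k C_i(r,k)n_k\pmod{p}$, so $\bsx_n$ is precisely the digital sequence in base $p$ with generating matrices $(C_1,\ldots,C_s)$. By Niederreiter's criterion, this sequence is a $(t,s)$-sequence provided that for every integer $m>t$ and every $(d_1,\ldots,d_s)\in\NN_0^s$ with $d_1+\cdots+d_s=m-t$, the row vectors $\{(C_i(r,k))_{0\leq k<m}:1\leq r\leq d_i,\,1\leq i\leq s\}$ are linearly independent in $\FF_p^m$.

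The heart of the proof is the translation of this linear-independence condition into a Laurent-series statement. Given a hypothetical non-trivial $\FF_p$-relation $\sum_{i,r}q_{i,r}C_i(r,k)=0$ valid for all $k=0,\ldots,m-1$, I would package the coefficients into polynomials $Q_i(X):=\sum_{r=1}^{d_i}q_{i,r}X^{r-1}$, which satisfy $\deg(Q_i)<d_i$ and $(Q_1,\ldots,Q_s)\neq\boldsymbol{0}$. A direct computation using $C_i(r,k)=a_{i,r+k}$ shows that the relation is equivalent to the vanishing of the coefficients of $X^{-1},\ldots,X^{-m}$ in $Q_1\theta_1+\cdots+Q_s\theta_s$, so
\[
\|Q_1\theta_1+\cdots+Q_s\theta_s\|\leq 2^{-(m+1)}.
\]
Combined with $|Q_1\cdots Q_s|\leq 2^{(d_1-1)+\cdots+(d_s-1)}=2^{m-t-s}$ when all $Q_i\neq 0$, this yields
\[
|Q_1\cdots Q_s|\cdot\|Q_1\theta_1+\cdots+Q_s\theta_s\|\leq 2^{-t-s-1},
\]
which contradicts $c_{\theta_1,\ldots,\theta_s}>0$ as soon as $t$ is chosen with $2^{-t-s-1}<c_{\theta_1,\ldots,\theta_s}$. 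For such a $t$ the required linear independence holds, and the classical discrepancy bound for digital $(t,s)$-sequences in base $p$ delivers $ND_N=O(\log^s N)$.

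The one obstacle that actually requires care is the case in which some $Q_i$ produced by the packaging step vanishes: then $|Q_1\cdots Q_s|=0$ and the counterexample inequality \eqref{eq:FFLC} does not directly yield a lower bound. The remedy is to pass to the dual form of the function-field Littlewood inequality, in which $|Q_1\cdots Q_s|$ is replaced by $\prod_i\max(|Q_i|,1)$; a function-field analog of the Cassels--Swinnerton-Dyer equivalence then reduces the dual form to \eqref{eq:FFLC}. Once this equivalence and the indexing between rows of $C_i$ and degrees of $Q_i$ are nailed down, no further estimates are needed, and Niederreiter's classical bound for $(t,s)$-sequences finishes the argument.
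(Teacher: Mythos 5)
Your overall strategy is the right one, and it is essentially the same digital-$(t,s)$ route the paper itself takes for its two-dimensional Theorem~\ref{thm:upperbound}: the Hankel matrices $C_i=H(\theta_i)$ do generate the Kronecker-type sequence, a nontrivial dependence among the first $d_i$ rows of the $C_i$ (restricted to $m$ columns) is exactly a tuple of polynomials with $\deg Q_i<d_i$, not all zero, and $\|Q_1\theta_1+\cdots+Q_s\theta_s\|\le 2^{-(m+1)}$, and in the case where all $Q_i\neq 0$ your estimate $c\le 2^{-t-s-1}$ pins down a finite $t$, after which Niederreiter's discrepancy bound for digital $(t,s)$-sequences finishes. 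The genuine gap is precisely the case you flag, some $Q_i=0$, and the remedy you propose does not close it. The Cassels--Swinnerton-Dyer theorem is an equivalence between the primal form $\inf_h h\|h\alpha_1\|\,\|h\alpha_2\|$ and the dual single-linear-form version; it is not a device for passing from the all-nonzero convention in \eqref{eq:FFLC} to the convention in which zero coefficients are allowed with weight $\prod_i\max(|Q_i|,1)$, and no such pointwise reduction can exist. Concretely, take $\theta_1\in\FF_p((X^{-1}))$ badly approximable (all partial quotients of degree one, so $|Q|\cdot\|Q\theta_1\|\ge c_{\theta_1}>0$ for all $Q\neq 0$) and $\theta_2=X^{-1}$. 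Writing $Q_2X^{-1}=P+Q_2(0)X^{-1}$ and multiplying by $X$, a short computation gives $|Q_1Q_2|\cdot\|Q_1\theta_1+Q_2\theta_2\|\ge c_{\theta_1}/4$ for all nonzero $Q_1,Q_2$, so this pair satisfies the literal all-nonzero condition defining a counterexample to \eqref{eq:FFLC}; yet $\langle\theta_2 n(X)\rangle|_p=n_0/p$ takes only $p$ values, the sequence is not even uniformly distributed, and $ND_N\gg N$. Hence the vanishing-$Q_i$ case is not a technicality that can be argued away from \eqref{eq:FFLC}: it encodes the quality of all sub-linear-forms, i.e.\ of all coordinate projections of the sequence, and this information is simply not contained in the all-nonzero infimum.

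What a complete proof needs is to use the hypothesis in the form where zero coefficients are allowed, namely $\inf\prod_{i}\max(|Q_i|,1)\cdot\|Q_1\theta_1+\cdots+Q_s\theta_s\|>0$ over all $(Q_1,\ldots,Q_s)\neq\boldsymbol{0}$ (this is what the Larcher--Niederreiter quality-parameter argument actually controls, and it is the intended reading of ``counterexample'' here); under that hypothesis your computation applies verbatim to the set of indices with $Q_i\neq 0$ and yields the $(t,s)$-property for a finite $t$, hence $ND_N=O(\log^s N)$. It is worth noting why the paper's own proof of the analogous Theorem~\ref{thm:upperbound} does not meet this obstacle: there one generating matrix is the identity, and the $X$-adic hypothesis ranges over all shifts $r\ge 0$, the marginal case $d_1=0$ being exactly $r=0$; in the genuinely $s$-dimensional Kronecker-type statement the zeros-allowed formulation (or a separate argument that every sub-tuple inherits the property, which the example above shows cannot follow from the all-nonzero form) is indispensable.
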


Similar as for the original Littlewood conjecture weaker versions of \eqref{eq:FFLC} were studied and e.g. as a consequence the following result is known.  
\begin{theorem}[\cite{Lar1995}]
	For almost all $(\theta_1,\ldots,\theta_s)\in\FF_p((X^{-1}))^s$, in the sense of Haar-measure, we have for every $\varepsilon>0$: 
	$$ND_N((\left<  \theta_1 n(X)\right>|_p,\ldots,\left<  \theta_s n(X)\right>|_p))=O\big((\log N)^s (\log\log  N)^{2+\varepsilon}\big)  .$$
\end{theorem}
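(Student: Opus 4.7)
The plan is to transpose the Koksma--Beck approach underlying Theorem~\ref{thm:beck} to the function-field setting: combine a Koksma--Sz\"usz / Erd\H{o}s--Tur\'an--Koksma-type inequality for Kronecker-type sequences with a Khintchine--Groshev-type metric lower bound on $\|Q_1\theta_1+\cdots+Q_s\theta_s\|$. Concretely, the discrepancy admits an upper bound of the form
$$ND_N\;\ll_s\;\frac{N}{M}\;+\;\sum_{Q\in A^*_M}\frac{1}{r(Q)}\Bigl|\sum_{n=0}^{N-1}\chi\bigl(\langle(Q_1\theta_1+\cdots+Q_s\theta_s)\,n(X)\rangle\bigr)\Bigr|,$$
where $\chi$ is a nontrivial additive character of $\FF_p((X^{-1}))$, $A^*_M$ ranges over nonzero tuples of polynomials whose degrees are bounded in terms of $M\asymp N$, and $r(Q)=\prod_{i=1}^s\max(1,|Q_i|)$. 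A short geometric-series calculation gives
$$\Bigl|\sum_{n=0}^{N-1}\chi(\cdots)\Bigr|\;\ll\;\min\bigl(N,\;1/\|Q_1\theta_1+\cdots+Q_s\theta_s\|\bigr),$$
so the problem reduces to a lower bound on $\|Q_1\theta_1+\cdots+Q_s\theta_s\|$ valid for almost every $(\theta_1,\ldots,\theta_s)$.

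For the metric input I would run a Borel--Cantelli argument against Haar measure on $\FF_p((X^{-1}))^s$. For each fixed nonzero $(Q_1,\ldots,Q_s)$, translation invariance gives
$$\mathrm{Haar}\bigl(\{(\theta_1,\ldots,\theta_s):\|Q_1\theta_1+\cdots+Q_s\theta_s\|<2^{-k}\}\bigr)\;\asymp\;2^{-k},$$
while the number of tuples with $\deg Q_1+\cdots+\deg Q_s=d$ is $O(d^{s-1}p^d)$. Choosing a weight $\psi(d)=d^s(\log d)^{1+\varepsilon/2}$ makes the series $\sum_d d^{s-1}\psi(d)^{-1}$ convergent, so Borel--Cantelli produces, for almost every $(\theta_1,\ldots,\theta_s)$ and all but finitely many $Q$,
$$\|Q_1\theta_1+\cdots+Q_s\theta_s\|\;\geq\;\frac{c_{\theta,\varepsilon}}{|Q_1\cdots Q_s|\,(\log|Q_1\cdots Q_s|)^s(\log\log|Q_1\cdots Q_s|)^{1+\varepsilon/2}}.$$

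Substituting this almost-sure bound into the exponential-sum sum and splitting the range of $Q$ according to whether the trivial bound $N$ or the metric bound $1/\|\cdot\|$ is smaller, one sums $1/\max(1,|Q_i|)$ coordinate by coordinate (one factor $\log N$ per coordinate) and absorbs the $(\log\log|Q|)^{1+\varepsilon/2}$ weight against the resulting $s$-fold harmonic-type sum, arriving at the target $(\log N)^s(\log\log N)^{2+\varepsilon}$. The main obstacle is calibrating $\psi$: it must be sharp enough for the Borel--Cantelli series to converge, yet weak enough that when fed through the Erd\H{o}s--Tur\'an sum it costs only one extra $\log\log$ factor, giving exactly $(\log\log N)^{2+\varepsilon}$ rather than a higher power. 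A subsidiary technical point is that the split between the trivial and metric regimes must be performed uniformly across the $Q$-range, so that no block of $Q$-tuples dominates the final estimate.
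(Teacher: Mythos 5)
This theorem is not proved in the paper at all: it is quoted verbatim from Larcher's 1995 paper \cite{Lar1995}, so there is no in-paper argument to compare against. Measured against what such a proof actually requires, your overall strategy (a character-sum/Erd\H{o}s--Tur\'an--Koksma-type inequality for the Kronecker-type sequence plus a metric Diophantine estimate for $\|Q_1\theta_1+\cdots+Q_s\theta_s\|$ with respect to Haar measure) is the right family of ideas, and the Borel--Cantelli computation you sketch does give, for almost every $(\theta_1,\ldots,\theta_s)$, the individual lower bound $\|Q_1\theta_1+\cdots+Q_s\theta_s\|\gg |Q_1\cdots Q_s|^{-1}d^{-s}(\log d)^{-(1+\varepsilon)}$ for all but finitely many $Q$ (modulo the harmless normalization slip: with the paper's convention $|\cdot|=2^{\deg}$ the Haar measure of $\{\|\xi\|<2^{-k}\}$ is $p^{-k}$, not $2^{-k}$).

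The genuine gap is the final summation step. In the regime where you replace $\min\bigl(N,1/\|\cdot\|\bigr)$ by the metric bound, the factor $1/r(Q)$ cancels exactly against $|Q_1\cdots Q_s|$, so each such term is of size up to $d^{s}(\log d)^{1+\varepsilon}\approx(\log N)^{s}(\log\log N)^{1+\varepsilon}$; a \emph{single} near-extremal $Q$ already exhausts the target up to one $\log\log$, and nothing in your argument controls \emph{how many} $Q$ lie in this near-extremal regime. Conversely, in the regime where you keep the trivial bound $N$, summing $N/r(Q)$ over all tuples of degree-sum $d$ contributes about $N d^{s-1}$, which is far too large unless that regime is shown to be almost empty -- again a counting statement about the set of $Q$ with $\|Q_1\theta_1+\cdots+Q_s\theta_s\|$ below a threshold, not a statement about individual $Q$. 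This is precisely the hard part: the term-wise insertion of a Khintchine--Groshev-type lower bound into the ETK sum is the classical "easy" argument and only yields bounds of the shape $(\log N)^{s+1+\varepsilon}$. To reach $(\log N)^{s}(\log\log N)^{2+\varepsilon}$ one must estimate the full sum $\sum_{Q}r(Q)^{-1}\min(N,1/\|\cdot\|)$ (equivalently, count dual vectors of the associated digital structure) in measure -- first/second moments over $\theta$ for each scale $N=p^m$, followed by Borel--Cantelli over the scales -- and it is exactly this averaging over $Q$ that produces the exponent $2+\varepsilon$ on $\log\log N$ in Larcher's theorem, in analogy with Beck's much more delicate analysis behind Theorem~\ref{thm:beck} in the real case. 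As written, your calibration of $\psi$ cannot be fixed to close this hole, because the obstruction is not the choice of weight but the absence of any bound on the number of near-extremal denominators.
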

Note that there are unsolved problems in $\RR$ that have been solved over finite fields. One example is the function field analog of the famous Riemann hypothesis (see e.g. \cite{Fra2014}). 
Nevertheless, the following two items are still open for Kronecker-type sequences:

	\begin{itemize}
		\item Is it true that for $s\geq 2$ we always have $ND_N((\left<  \theta_1 n(X)\right>_p,\ldots,\left<  \theta_s n(X)\right>_p))=\Omega(\log^s N ) $?
		\item Is it possible to identify for $s\geq 2$ an example $(\theta_1,\ldots,\theta_s)\in\RR^s$ such that
		$$ND_N(\left<  \theta_1 n(X)\right>|_p,\ldots,\left<  \theta_s n(X)\right>|_p)=O(\log^s N)  ?$$
\end{itemize}

The last variant, which we would like to discuss in this manuscript, is the so-called $X$-adic Littlewood conjecture \cite{MatTeu2004}, the corresponding analog of the unsolved $p$-adic Littlewood conjecture. 
The $X$-adic Littlewood conjecture states for every $\theta \in \FF((X^{-1}))$, 
$$\inf_{r\in\NN_0,\,Q\in\FF[X]\setminus\{\boldsymbol{0}\}} |Q| \cdot\|QX^r\theta\|=0.$$
If the ground field $\FF$ is infinite counterexamples are already known, see \cite[Th\'{e}or\`eme~4.3]{MatTeu2004}. However, the method in \cite{MatTeu2004} fails for finite fields $\FF$. Note that a counterexample over a finite field will be interesting for the construction of a two-dimensional low-discrepancy \emph{van der Corput--Kronecker-type sequence}. For such a two-dimensional van der Corput--Kronecker-type sequence choose $p\in\PP$ and $\theta\in\FF_p((X^{-1}))$ and combine the $p$-adic van der Corput sequence $(\varphi_p(n))_{n\geq 0}$ with the Kronecker-type sequence $(\left<  \theta n(X)\right>|_p)_{n\geq 0}$. Levin \cite{levinLDhybrid} was the first who noted that a van der Corput--Kronecker-type sequence based on a counterexample to the $X$-adic Littlewood conjecture over $\FF_p$ is a low-discrepancy sequence. 
Robertson \cite{Robertson} recently worked out a detailed proof for the following Theorem~\ref{thm:upperbound}.

\begin{theorem}[\cite{Robertson}]\label{thm:upperbound}
	Let $\theta\in\FF_p((X^{-1}))$ be a counterexample to the $X$-adic Littlewood conjecture over $\FF_p$, i.e. $$\inf_{r\geq 0,Q\in\FF_p[X]\setminus\{\boldsymbol{0}\}} |Q| \cdot\|X^rQ\theta\|\geq c>0.$$ 
	Then the discrepancy of the two-dimensional van der Corput--Kronecker-type sequence satisfies $$N D_N((\varphi_p(n),{\left<  \theta n(X)\right>}|_p ))=O(\log^2 N)$$ with an implied absolute constant only depending on $\theta$ and $p$. 
\end{theorem}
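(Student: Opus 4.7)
The plan is to apply the two-dimensional Erdős--Turán--Koksma inequality with a truncation parameter $M$ of order $N$, reducing the task to bounding the weighted exponential-sum aggregate
$$\sum_{0<\max(|h_1|,|h_2|)\leq M}\frac{1}{\max(1,|h_1|)\max(1,|h_2|)}\Bigl|\sum_{n=0}^{N-1}e^{2\pi i(h_1\varphi_p(n)+h_2\langle\theta n(X)\rangle|_p)}\Bigr|$$
by $O(N\log^2 N)$. The degenerate cases $h_2=0$ and $h_1=0$ are pure van der Corput and pure one-dimensional Kronecker-type exponential sums respectively, each contributing $O(N\log N)$ by classical bounds, using in the second case the $r=0$ specialisation $|Q|\cdot\|Q\theta\|\geq c$ of the counterexample hypothesis.

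For the genuinely hybrid case $h_1h_2\neq 0$, I would exploit the digit-wise additivity of both coordinates. Writing $n=a+p^k b$ with $0\leq a<p^k$ and using $n(X)=a(X)+X^k b(X)$, one obtains
$$\varphi_p(n)=\varphi_p(a)+p^{-k}\varphi_p(b),\qquad \langle\theta n(X)\rangle|_p=\langle\theta a(X)\rangle|_p+\langle X^k\theta b(X)\rangle|_p.$$
Choosing $k$ essentially as $\lceil\log_p|h_2|\rceil$ and summing first over the low digits $a$ and then over the high digits $b$ factorises the inner exponential sum. The $a$-sum is a classical Walsh/character-type sum that is either zero or of modulus $p^k$, and the $b$-sum is a Kronecker-type exponential sum whose Diophantine quality is governed precisely by $|Q_{h_2}|\cdot\|X^k Q_{h_2}\theta\|$, where $Q_{h_2}\in\FF_p[X]$ encodes the base-$p$ digits of $h_2$. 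This is where the full strength of the $X$-adic Littlewood counterexample hypothesis enters: the bound $|Q|\cdot\|X^k Q\theta\|\geq c$ holds \emph{uniformly in} $k\geq 0$, so the resulting estimate on the $b$-sum is independent of the block level.

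Assembling the estimates, one logarithmic factor arises from the weighted summation over $|h_1|\leq M$ and a second from the weighted summation over $|h_2|\leq M$ (equivalently, over the $O(\log_p N)$ many base-$p$ digit blocks), yielding the claimed $O(\log^2 N)$. The main obstacle is the careful bookkeeping in the hybrid case: one must choose the splitting index $k$ compatibly with the orders of magnitude of both $h_1$ and $h_2$, handle the non-vanishing condition for the $a$-sum (which pins down certain digits of $h_1$ in terms of those of $h_2$), and ensure that the cross terms do not introduce an additional logarithmic loss. The uniformity in $k$ granted by the $X$-adic counterexample property is precisely what allows all these estimates to be summed without further loss.
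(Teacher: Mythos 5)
Your proposal has a genuine gap at its central step. The factorisation of the hybrid sum under the splitting $n=a+p^k b$ is valid for the first coordinate, since $\varphi_p(a+p^kb)=\varphi_p(a)+p^{-k}\varphi_p(b)$ holds as an identity of real numbers (the digit blocks do not overlap). It fails for the second coordinate: from $n(X)=a(X)+X^kb(X)$ one gets $\langle\theta n(X)\rangle=\langle\theta a(X)\rangle+\langle X^k\theta b(X)\rangle$ only as Laurent series over $\FF_p$, i.e.\ the coefficients add \emph{modulo} $p$ without carries. After evaluating at $X=p$ this is a digitwise operation $\oplus$, not real addition, so in general $\langle\theta n(X)\rangle|_p\neq\langle\theta a(X)\rangle|_p+\langle X^k\theta b(X)\rangle|_p$ and $e^{2\pi i h_2\langle\theta n(X)\rangle|_p}$ does not split into a product over the two digit blocks. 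For the same reason the integer-frequency characters $e^{2\pi i h_2 y}$ used in the classical Erd\H{o}s--Tur\'an--Koksma inequality are not adapted to the sequence $(\langle\theta n(X)\rangle|_p)_n$: it is not of the form $(\{n\alpha\})_n$, it is only additive with respect to $\oplus$, and already your ``degenerate'' case $h_1=0$ is not covered by classical Kronecker-sequence exponential-sum bounds via $|Q|\cdot\|Q\theta\|\geq c$; the Diophantine hypothesis enters through polynomial frequencies $Q\in\FF_p[X]$, not integer frequencies $h_2$.

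The standard repair is to replace the exponentials by base-$p$ Walsh functions (equivalently, additive characters of $\FF_p$ applied digitwise), which \emph{are} multiplicative with respect to $\oplus$; but once you do this the estimate of the resulting character sums reduces exactly to rank conditions on row-selections of the generating matrices, and you are reconstructing the route the paper takes: the sequence is the digital sequence generated by the identity matrix and the Hankel matrix $H(\theta)$ (Lemma~\ref{ref:identifymatrices}), the counterexample hypothesis is equivalent to a uniform bound $\deg(A_j^{(r)})\leq D(\theta)+1$ on the partial quotients of $\langle X^r\theta\rangle$ for all $r\geq0$ (Lemma~\ref{lem:Counterexample}), the regularity of the relevant Hankel submatrices is governed by the convergent denominators (Lemma~\ref{prop:1}), and together these show the pair $(I,H(\theta))$ generates a digital $(D(\theta),2)$-sequence, whence $ND_N=O(\log^2 N)$ by the standard discrepancy bound for digital $(t,s)$-sequences. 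Your intuition that the uniformity in the shift parameter ($k$, resp.\ $r$) is the essential input is correct, but the harmonic-analysis vehicle you chose cannot carry it; as written, the hybrid factorisation and the ``classical bounds'' for the marginal sums do not hold.
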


Note that, similar to the other variants of the Littlewood conjecture, metrical results for weaker forms are known. See \cite{Robertson} and previously \cite{HoferdigKH}. In \cite{HoferdigKH} these metrical results were already applied to the discrepancy of so-called Halton--Kronecker-type sequences. In its most basic form such a sequence is a van der Corput--Kronecker-type sequence. For example in \cite{HoferdigKH} the following result was obtained.

\begin{theorem}
	Let $\FF=\FF_p$ then for almost all $\theta$, in the sense of Haar mesure, for every $\varepsilon>0$,
	$$ND_N((\varphi_p(n),{\left<  \theta n(X)\right>}|_p))=O(\log^{2+\varepsilon}(N)),$$
		with an implied constant only depending on $\theta$, $p$, and $\varepsilon$.
\end{theorem}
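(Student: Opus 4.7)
The plan is to establish a metric Diophantine surrogate for the $X$-adic Littlewood property and then feed this surrogate into the framework underlying Theorem~\ref{thm:upperbound}. The first step is to show that for almost every $\theta\in\FF_p((X^{-1}))$ and every $\varepsilon>0$ there exists $c=c(\theta,\varepsilon)>0$ with
\begin{equation*}
|Q|\cdot\|X^{r}Q\theta\|\;\geq\;\frac{c}{(r+\deg Q+1)^{2+\varepsilon}}
\end{equation*}
for all $r\geq 0$ and all nonzero $Q\in\FF_p[X]$. The second step is to re-run Robertson's proof of Theorem~\ref{thm:upperbound} with this quantitative ``almost counterexample'' in place of the exact $X$-adic Littlewood hypothesis.

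For the first step, I would apply the convergence half of Borel--Cantelli. Since multiplication by $X^{r}Q$ is measure preserving on the Laurent-series torus, the Haar measure of $\{\theta:|Q|\,\|X^{r}Q\theta\|<\psi(|Q|\cdot 2^{r})\}$ is comparable to $\psi(|Q|\cdot 2^{r})/|Q|$. Choosing $\psi(n)=(\log n)^{-(2+\varepsilon)}$ and noting that there are at most $p^{d+1}$ polynomials of degree $d$ over $\FF_p$, the total measure to sum is
\begin{equation*}
\sum_{r\ge 0}\sum_{d\ge 0}\;p^{d+1}\cdot\frac{\psi(p^{d+r+1})}{p^{d}}\;\asymp\;\sum_{r,d\ge 0}\frac{1}{(d+r+1)^{2+\varepsilon}}\;<\;\infty,
\end{equation*}
so all but finitely many of the bad events do not occur. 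Enlarging $c$ absorbs the finite exceptional list of $(Q,r)$, which is harmless because the set of $\theta$ for which $|Q|\,\|X^{r}Q\theta\|=0$ for some fixed $(Q,r)$ is null.

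For the second step, I would trace through Robertson's argument and identify every invocation of the bound $|Q|\cdot\|X^{r}Q\theta\|\ge c$. These occur when estimating reciprocals $1/(|Q|\cdot\|X^{r}Q\theta\|)$ that appear in an Erd\H{o}s--Tur\'an--Koksma-type decomposition of the exponential sums attached to the hybrid sequence $(\varphi_p(n),\langle\theta n(X)\rangle|_p)$. Replacing the uniform lower bound by the metric lower bound from step one inflates each summand by at most $(\log N)^{2+\varepsilon}$, since the effective range of summation forces $\deg Q+r=O(\log N)$. Pulling this factor outside the sum turns Robertson's $O(\log^{2}N)$ into $O(\log^{2+\varepsilon'}N)$; after relabelling $\varepsilon'$ we obtain the claimed estimate.

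The main obstacle is the second step: the metric bound degenerates for small $r+\deg Q$, and care is required to ensure that no additional powers of $\log N$ sneak into Robertson's decomposition beyond the allowed $(2+\varepsilon)$. The terms with $|Q|\cdot 2^{r}$ bounded are finite in number and can be absorbed into the implied constant, whereas the main range is treated precisely as described above. The overall strategy parallels the computation already carried out in \cite{HoferdigKH} in the broader setting of Halton--Kronecker-type sequences, from which the stated bound is in fact inherited.
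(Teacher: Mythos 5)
The paper itself gives no proof of this statement -- it is quoted from \cite{HoferdigKH} -- so your proposal has to stand on its own, and it does not. Your first step is fine: the Borel--Cantelli computation is standard and does yield, for almost every $\theta$ and every $\varepsilon>0$, a constant $c(\theta,\varepsilon)>0$ with $|Q|\cdot\|X^rQ\theta\|\geq c\,(r+\deg Q+1)^{-(2+\varepsilon)}$ for all $r\geq 0$ and all nonzero $Q$ (the measure of each bad event is harmless to estimate in either normalization of the absolute value, and the exceptional finitely many pairs are absorbed as you say). The genuine gap is in the second step. In the uniform-counterexample argument (Theorem~\ref{thm:upperbound}, via Lemma~\ref{lem:Counterexample} and Corollary~\ref{coro:1}) the constant $c$ enters \emph{multiplicatively}: it fixes the deficiency $D(\theta)\asymp\log(1/c)$, hence the quality parameter $t$, and the resulting bound is of the shape $ND_N\ll p^{t}\log^2N\asymp c^{-1}\log^2 N$. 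If you now feed in your metric inequality, the relevant pairs $(Q,r)$ at truncation level $m\asymp\log_p N$ only satisfy $|Q|\,\|X^rQ\theta\|\gg(\log N)^{-(2+\varepsilon)}$, so the level-$m$ quality parameter is only $O(\log\log N\cdot(2+\varepsilon))$ and $p^{t(m)}\asymp(\log N)^{2+\varepsilon}$; the chain then delivers $O(\log^{4+\varepsilon}N)$, not $O(\log^{2+\varepsilon}N)$. Your own accounting shows the same inconsistency: if each summand of a sum totalling $O(\log^2N)$ is inflated by a factor $(\log N)^{2+\varepsilon}$, pulling that factor out gives $\log^{4+\varepsilon}N$; the claim that only an extra $\varepsilon$ appears in the exponent is asserted, not proved.

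Moreover, this loss is intrinsic to the route ``pointwise Diophantine inequality $+$ worst-case $t$-value'', so no patch of step one will rescue step two. By the divergence half of Borel--Cantelli (the function-field Khinchin phenomenon), for almost every $\theta$ there are infinitely many levels $m$ at which some partial quotient of some $\langle X^r\theta\rangle$ with $r\leq m$ relevant at that level has degree of order $\log_p m$, so $p^{t(m)}$ is genuinely of polynomial size in $\log N$ infinitely often; your step-one bound is essentially optimal as a pointwise statement and cannot be sharpened to make $p^{t(m)}\ll(\log N)^{\varepsilon}$. The known proofs of such metric discrepancy bounds (in \cite{HoferdigKH}, and previously \cite{Lar1995}, \cite{HofLar2012}) therefore do not insert a single worst-case lower bound; they bound the discrepancy by a sum over the continued-fraction/digit data of all the shifts $\langle X^r\theta\rangle$ and control the \emph{measure of $\theta$ for which that whole sum is large}, exploiting that large partial quotients are rare rather than merely bounded. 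Finally, the closing appeal that the bound is ``inherited'' from \cite{HoferdigKH} is circular, since the statement to be proved is precisely the theorem of that paper.
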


In contrast to the fact that \eqref{eq:FFLC} over a finite field $\FF=\FF_p$ is still open, a recent counterexample to the $X$-adic Littlewood conjecture over $\FF=\FF_3$ was discovered by Adiceam, Nesharim, and Lunnon \cite{Adiceametal}. This counterexample is defined as $\theta=\sum_{i\geq 1}f_iX^{-i}$ over $\FF_3$ with $(f_n)_{n\geq 1}$ is the paperfolding sequence in $\{0,1\}$, defined as 
$$f_n=\left\{ \begin{array}{cc} 0&k\equiv 1\pmod{4}\\
1& k\equiv 3\pmod{4}
	\end{array}\right.,$$
	with odd $k\in\NN$ such that $n=2^{\nu_2(n)}k$. 
Then $$\inf_{r\geq 0,Q\in\FF_3[X]\setminus\{\boldsymbol{0}\}} |Q| \cdot\|X^rQ\theta\|\geq 2^{-4}=c>0.$$ 

For larger $p=5,7,11$ further counterexample can be found in \cite{GR}. Note that the proofs in \cite{Adiceametal,GR} are computer assisted and therefore limited to small $p$. 
It remains an open problem to find explicit counter examples over every finite field $\FF_p$ with $p>2$. The restriction to positive $p\neq 2$ indicates that $p=2$ is different. In \cite[Conjecture~1.8]{GR} it's argued that the $X$-adic Littlewood conjecture might be true in the case where $\FF=\FF_2$.

With the specific counterexamples to the $X$-adic Littlewood conjecture over a finite field $\FF_p$, Theorem~\ref{thm:upperbound} gives new examples of low-discrepancy sequences.  
Having a new type of a low-discrepancy sequence in two dimensions, one is interested in a lower bound for its discrepancy. It should be emphasized once again at this point that for a concrete example of a low-discrepancy sequence in two-dimensions, a result of the form $N D_N=\Omega(\log^2 N)$ supports the well-established conjecture for the discrepancy while a smaller exact order of $ND_N$ in $N$ would bring a counterexample to this conjecture in uniform distribution, which would result in many new problems in the theory of uniform distribution. 
Our main theorem of this manuscript, Theorem~\ref{thm:lowerbound}, gives the desired lower bound for the discrepancy of these low-discrepancy sequences. 

\begin{theorem}\label{thm:lowerbound}
Let $\theta\in\FF_p((X^{-1}))$ be a counterexample to the $X$-adic Littlewood conjecture over $\FF_p$, i.e. $$\inf_{r\geq 0,Q\in\FF_p[X]\setminus\{\boldsymbol{0}\}} |Q| \cdot\|X^rQ\theta\|\geq c>0.$$  Then the discrepancy of the two-dimensional van der Corput--Kronecker-type sequence satisfies $$N D_N((\varphi_p(n),{\left<  \theta n(X)\right>}|_p))=\Omega(\log^2 N)$$ with an implied constant only depending on $\theta$ and $p$. 
\end{theorem}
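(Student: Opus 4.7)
The plan is to follow the template developed by Levin \cite{LevinH1,Levints2} and adapted by the author in \cite{HoferPAMS} for establishing matching $\Omega(\log^s N)$ lower bounds for low-discrepancy digital sequences. First I would recast the sequence $(\varphi_p(n),\langle\theta n(X)\rangle|_p)_{n\geq 0}$ as a digital sequence in base $p$. Writing $\theta=\sum_{i\geq 1}f_iX^{-i}$ and $n=\sum_{j\geq 0}n_jp^j$ with $n_j\in\FF_p$, the $k$-th base-$p$ digit of $\langle\theta n(X)\rangle|_p$ equals the linear form $\sum_{j\geq 0}n_jf_{k+j}$, so the generating matrix of the second coordinate is the Hankel matrix $C^{(2)}=(f_{i+j-1})_{i,j\geq 1}$, while that of the first coordinate is the identity. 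A short calculation shows that the counterexample assumption $|Q|\cdot\|X^rQ\theta\|\geq c>0$ is equivalent to the statement that, for every $m$ and every $d_1,d_2\geq 0$ with $d_1+d_2\leq m-t$, the first $d_1$ rows of the identity together with the first $d_2$ rows of $C^{(2)}$, each truncated to $m$ columns, are linearly independent over $\FF_p$, where $t$ depends only on $c$. In particular, the first $p^K$ points of the sequence form a $(t,K,2)$-net in base $p$ for every $K>t$, which already recovers Theorem~\ref{thm:upperbound}.

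For the lower bound I would pass to the Walsh--Fourier picture. For a suitable sequence of values of $N$ (for instance $N=p^K$ or $N=p^K+p^{K_0}$ with $K_0$ close to $K/2$), the local discrepancy function admits a Walsh expansion whose coefficients are supported on a ``dual figure'' $\mathcal{D}$ determined by the matrices $I$ and $C^{(2)}$, and whose magnitudes are controlled by the size of the shortest polynomial representative of the corresponding dual vector. The counterexample condition yields a Dirichlet-type companion: for every $q\geq 0$ there exists a polynomial $Q\in\FF_p[X]$ with $\deg Q\leq q$ such that $c\leq|Q|\cdot\|Q\theta\|\leq 1$. Such polynomials provide, at each scale $(d_1,d_2)$ with $d_1+d_2\approx K$, explicit elements of $\mathcal{D}$ whose associated Walsh coefficient of the counting function is bounded below by a constant depending only on $\theta$ and $p$. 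I would then build a test measure as a Riesz-product-like sum of the Walsh characters $\chi_{\bsk}$ attached to one chosen scale-representative per pair $(d_1,d_2)$, and pair it against the discrepancy function. By the orthogonality of Walsh characters, the pairing reduces to a single Walsh coefficient per scale, producing a total contribution of order $\log N$ per ``block'' of scales. Iterating over the $\sim\log N$ available blocks of $N$ gives $ND_N=\Omega(\log^2 N)$ for infinitely many $N$.

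The main obstacle lies in ruling out systematic cancellations between the contributions from different scales in the Riesz-product construction. This is a familiar difficulty in proofs of $\Omega(\log^s N)$ lower bounds and is traditionally handled through the near-orthogonality of Walsh characters attached to distinct scales. The additional ingredient particular to the present setting is the Hankel structure of $C^{(2)}$: the counterexample condition on $\theta$ imposes quantitative upper bounds on the rank defects of the finite Hankel minors formed from $(f_i)_{i\geq 1}$, which in turn guarantees a positive-density supply of ``good'' scales $(d_1,d_2)$ carrying non-trivial Walsh contributions and ensures that the accumulated lower bound is indeed of the full order $\log^2 N$ rather than some smaller power of $\log N$.
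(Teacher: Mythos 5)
Your first step---identifying $(\varphi_p(n),\langle\theta n(X)\rangle|_p)$ as the digital sequence generated by the identity and the Hankel matrix of $\theta$, and translating the counterexample hypothesis into the $(t,2)$-sequence rank condition---is exactly what the paper does (Lemma~\ref{ref:identifymatrices}, Lemma~\ref{prop:1}, Lemma~\ref{lem:Counterexample}, Corollary~\ref{coro:1}), and it does recover Theorem~\ref{thm:upperbound}. The lower bound itself, however, is where your proposal has a genuine gap: the entire quantitative content is delegated to a Riesz-product/Walsh pairing for which you assert that ``the pairing reduces to a single Walsh coefficient per scale'' and that summing $\sim\log N$ contributions per block over $\sim\log N$ blocks yields $\Omega(\log^2 N)$, while in the last paragraph you yourself concede that the cancellation between scales---precisely the point where such arguments live or die---is unresolved. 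Plain orthogonality does not dispose of the cross terms generated by a Riesz product, and generic small-ball/Riesz-product machinery for arbitrary point sets only yields exponents of the type $\log^{s/2+\eta}N$ (cf.\ \cite{BLV}); to reach the full $\log^2 N$ one must exploit the specific arithmetic structure in a concrete, sign-definite way, and your sketch never does this. The Dirichlet-companion polynomials and the ``positive-density supply of good scales'' are plausible ingredients, but no mechanism is given that converts them into a lower bound free of cancellation, so as written the proposal does not prove Theorem~\ref{thm:lowerbound}.

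For comparison, the paper avoids Fourier analysis altogether and follows Levin \cite{Levints1} by a same-sign counting argument: it appends the coordinate $n/p^m$ via the reversal matrix $J^{(m)}$, obtaining a digital $(D(\theta),m,3)$-net (Lemma~\ref{lem:1}); it proves a digital minimum distance, namely $(D(\theta)+3)$-admissibility (Lemma~\ref{lem:admissible}), again from the bounded deficiency via Hankel-minor regularity; it then constructs an explicit box $J=[0,\gamma^{(1)})\times[0,\gamma^{(2)})\times[0,\gamma^{(3)})$ whose digits come in blocks of length $v=3(D(\theta)+1)$ so that, in the elementary-interval decomposition of $J$, every interval of order at most $m-D(\theta)$ is counted exactly fairly by the net property, while every interval of higher order is provably empty because a point inside it would be digitally too close to the unique point $\bsx_{\overline n}$ in the corner box, contradicting admissibility. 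All error terms therefore have one sign, and counting the $\geq c\,m^2$ empty high-order intervals gives $p^mD^*_{p^m}\geq c\,m^2$ (Proposition~\ref{thm:3}); Lemma~\ref{lem:2} (cf.\ \cite[Lemma~3.7]{niesiam}) then transfers this three-dimensional net bound back to the two-dimensional sequence. If you want to salvage your route, you would need to supply the analogue of this sign-definiteness---e.g.\ an explicit test box or test function for which the relevant Walsh coefficients are shown not to cancel---which is exactly the step currently missing.
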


It is a natural impulse to expand the two-dimensional sequence $$((\varphi_p(n),{\left<  \theta n(X)\right>}|_p))_{n\geq 0}$$ to a three-dimensional one with the aim of preserving the low-discrepancy property. 
Two different approaches come to mind here. The first is to define a three dimensional sequence $((\varphi_p(n),{\left<  \theta_1 n(X)\right>}|_p,{\left<  \theta_2 n(X)\right>}|_p))_{n\geq 0}$ with $\theta_1,\theta_2 \in\FF_p((X^{-1}))$ and study 
\begin{align}\label{equ:3dim1}
	\inf_{r\in\NN_0,\,Q_1,Q_2\in\FF[X]\setminus\{\boldsymbol{0}\}} |Q_1Q_2| \cdot\|(Q_1\theta_1+Q_2\theta_2)X^r\|.
\end{align}

Since it is open whether or not
$\inf_{Q_1,Q_2\in\FF[X]\setminus\{\boldsymbol{0}\}} |Q_1Q_2| \cdot\|(Q_1\theta_1+Q_2\theta_2)\|=0$
for every $\theta_1,\theta_2 \in\FF_p((X^{-1}))$, a three dimensional generalization of the form $((\varphi_p(n),{\left<  \theta_1 n(X)\right>}|_p,{\left<  \theta_2 n(X)\right>}|_p))_{n\geq 0}$ is not the first choice. 

As an alternative of \eqref{equ:3dim1} one can study whether or not
\begin{align}\label{equ:3dim2}\inf_{r_1,r_2\in\NN_0,\,Q\in\FF[X]\setminus\{\boldsymbol{0}\}} |Q| \cdot\|X^{r_1}(X+1)^{r_2}Q\theta\|=0\end{align}
for every $\theta\in\FF_p((X^{-1}))$. 
While counterexamples to \eqref{equ:3dim2} are known for infinite fields $\FF$, see \cite[Theorem~1]{BugMat2008}, a counterexample over a finite field $\FF_p$, i.e. $\theta\in\FF_p((X^{-1}))$, such that $\inf_{r_1,r_2\in\NN_0,\,Q\in\FF[X]\setminus\{\boldsymbol{0}\}} |Q| \cdot\|X^{r_1}(X+1)^{r_2}Q\theta\|\geq c>0$ is still an open problem. A counterexample over $\FF_p$ would bring similarly to Theorem~\ref{thm:upperbound} a result of the form 
$$ND_N((x_1,x_2,{\left<  \theta_1 n(X)\right>}|_p))=O(\log^3 N),$$
where $(x_1,x_2)_{n\geq 0}$ is a two-dimensional Halton-type sequence in the sense of the author. To avoid further quite technical definitions in this manuscript we refer the interested reader to  e.g. \cite{HoferdigKH} and \cite{Hofer} for more information on Halton-type sequences. However, it can be stated here that this is not the end of research in the area of variants of Littlewood's conjecture and the uniform distribution properties of related sequences, but rather that it will give rise to further interesting problems.\\

The rest of the paper is devoted to prove our main Theorem~\ref{thm:lowerbound} and reprove Theorem~\ref{thm:upperbound}. Section~\ref{sec:prerequ} collects the essential basic ideas of the concept of digital $(t,s)$-sequences in the sense of Niederreiter \cite{nie} as well as the most important properties of the simple continued fraction of an element in $\FF((X^{-1}))$. We identify a van der Corput--Kronecker-type sequence based on a counterexample $\theta$ of the $X$-adic Littlewood conjecture over $\FF_p$ as digital $(t,2)$-sequence, where $t$ will be quantified depending on the degrees of the simple continued fraction coefficients of $\langle X^r\theta\rangle$. Within the concept of digital $(t,2)$-sequences the low-discrepancy upper bound in Theorem~\ref{thm:upperbound} then immediately follows. Finally, in Section~\ref{sec:3} we develop the proof of the lower bound in Theorem~\ref{thm:lowerbound}, which is based on ideas from Levin \cite{Levints1}. 

\section{Identifying van der Corput--Kronecker-type sequences as digital sequences and a short proof of Theorem~\ref{thm:upperbound}}\label{sec:prerequ}

For some aspects of the class of van der Corput--Kronecker-type sequences, it is useful to interpret these sequences as digital $(t,s)$-sequences. In particular when investigating the discrepancy of a sequence the digital $(t,s)$-sequences concept, if applicable, is often the prefered method. We first define the digital method together with digital $(t,m,3)$-nets and digital $(t,2)$-sequences in the sense of Niederreiter (see e.g. \cite{niesiam}) as simple as possible to serve our purposes for Theorem~\ref{thm:upperbound} and Theorem~\ref{thm:lowerbound}. We do not distinguish between $\FF_p$ and the set $\{0,1,\ldots,p-1\}$.

\begin{definition}\label{def:t}
Let $p\in\PP$ and $m\in\NN$. Choose three $\NN\times m$-matrices $C_1,\,C_2,\,C_3$ over $\FF_p$, with $\NN$ indicating that they have infinitely many rows indexed by $1,2,3,\ldots$.  We construct a so-called digital pointset $(\bsx_n)_{0\leq n<p^m}$ of $p^m$ points as follows. Represent the nonnegative integer $n<p^m$ in base $p$, i.e.
$$ n = n_0+n_1p+\dots+n_{m-1}p^{m-1}\quad \mbox{with }0\leq n_j<p, $$
and set
$$ \vec n :=(n_0,\,\dots,\,n_{m-1})^T. $$
To generate the $i$th coordinate $x_n^{(i)}$ of $\bsx_n$, compute 
$$ C_i\cdot\vec n =:(y_1^{(i)},\,y_2^{(i)},\,\dots)^T\pmod{p}, $$
and set
$$ x_n^{(i)}:=\frac {y_1^{(i)}}p+\frac {y_2^{(i)}}{p^2}+\dots\,. $$
Let $t\in\NN_0$, $t<m$. We call the digital pointset $(\bsx_n)_{0\leq n<p^m}$ a \emph{digital $(t,m,3)$-net over $\FF_p$} if the following holds: 
for all $d_1,d_2,d_3\in\NN_0$ with $d_1+d_2+d_3\leq m-t$ the $(d_1+d_2+d_3)\times m$-matrix consisting of the

upper $d_1$ rows of $C_1$ together with the

upper $d_2$ rows of $C_2$ together with the

upper $d_3$ rows of $C_3$, 

\noindent has full row rank $d_1+d_2+d_3$.

For a digital sequence $(\bsx_n)_{n\geq 0}$ in $ [0,1)^2$ we choose two $\NN\times\NN_0$-matrices $C_1,\,C_2$ over $\FF_p$  with $\NN_0$ indicating that they have infinitely many columns indexed by $0,1,2,3,\ldots$. To generate the $i$th coordinate $x_n^{(i)}$ of $\bsx_n$, represent the integer $n\geq 0$ in base $p$, i.e.
$$ n = n_0+n_1p+\dots+n_rp^r\quad \mbox{with }0\leq n_j<p, $$
and with $n_r\neq 0$, set
$$ \vec n :=(n_0,\,\dots,\,n_r,\,0,\,0,\,\dots)^T, $$
and 
$$ C_i\cdot\vec n =:(y_1^{(i)},\,y_2^{(i)},\,\dots)^T\pmod{p}. $$
Further
$$ x_n^{(i)}:=\frac {y_1^{(i)}}p+\frac {y_2^{(i)}}{p^2}+\dots\,. $$
Let $t\in\NN_0$. We call the digital sequence $(\bsx_n)_{n\ge 0}$ a \emph{digital $(t,2)$-sequence over $\FF_p$} if the following holds: 
for every $m\in\NN, m>t$ and for all $d_1,d_2\in\NN_0$ with $d_1+d_2\leq m-t$ the $(d_1+d_2)\times m$-matrix consisting of the

left upper $d_1 \times m$-submatrix of $C_1$ together with the

left upper $d_2 \times m$-submatrix of $C_2$ 

\noindent has full row rank $d_1+d_2$.

\end{definition}

Usually the construction for digital sequences and digital nets is described for a general dimension $s$ and digital nets are based on $m\times m$ matrices instead of $\NN\times m$ matrices. For our proof of Theorem~\ref{thm:lowerbound} we need more exact digital information. 
It is well-known that digital $(t,s)$-sequences are low-discrepancy sequences (cf. e.g. \cite[Theorem~4.17]{niesiam}). See for instance \cite{faure,Hofer,HoNie,nie,NieYeo,sobol,NieXing} for different constructions of $(t,s)$-sequences and \cite{Levints1,Levints2} for lower bounds of the discrepancy of the form $ND_N=\Omega(\log^s N)$ for many of them.


The first step is to recognize the van der Corput--Kronecker-type sequences, which are based on counterexamples to the $X$-adic Littlewood conjecture over $\FF_p$, in Theorem~\ref{thm:upperbound} and in Theorem~\ref{thm:lowerbound} as digital $(t,2)$-sequences over $\FF_p$. For this, it is important to explain these sequences within the concept of the digital method. We start with Lemma~\ref{ref:identifymatrices} which recognizes van der Corput--Kronecker-type sequences as digital sequences. 

\begin{lemma}\label{ref:identifymatrices}
Let $p\in\PP$ and $\theta=\sum_{i=j}^\infty a_iX^{-i}\in\FF_p((X^{-1}))$. Let $I$ denote the $\NN\times \NN_0$ unit matrix over $\FF_p$. Let $H(\theta)$ be the \emph{Hankel matrix} determined by the Laurent series $\theta$ defined as
$$H(\theta)=\begin{pmatrix} a_1&a_2&a_3&\ldots \\
	a_2&a_3&a_4&\ldots \\
	a_3&a_4&a_5&\ldots \\
	\vdots &\ddots&\ddots&\ddots\end{pmatrix}\in\FF_p^{\NN\times \NN_0},$$
 with the setting $a_1=a_2=\cdots=a_{j-1}=0$ in the case where $j>1$. Then the van der Corput--Kronecker-type sequence $((\varphi_p(n),{\left<  \theta n(X)\right>}|_p))_{n\geq 0}$ corresponds with the two-dimensional digital sequence generated by $I$ and $H(\theta)$. 
\end{lemma}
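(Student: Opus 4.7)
The plan is a direct verification within the digital framework of Definition~\ref{def:t}. The statement decouples into two independent checks, one per coordinate, so I would treat them in turn.

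For the first coordinate, I would observe that writing $n=n_0+n_1p+\cdots+n_rp^r$ gives $\varphi_p(n)=\sum_{k=0}^{r}n_k p^{-(k+1)}$, so the $\ell$-th base-$p$ digit of $\varphi_p(n)$ equals $n_{\ell-1}$. Packaging the digits of $n$ into the column vector $\vec n=(n_0,n_1,\ldots)^T$ padded with zeros, this is exactly the equation $\vec y^{(1)}=I\cdot\vec n$ in $\FF_p$, so $C_1=I$ generates the first coordinate.

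For the second coordinate I would compute the product $\theta\cdot n(X)$ directly and isolate its fractional part. Writing
$$\theta\cdot n(X)=\Bigl(\sum_{i\ge j}a_i X^{-i}\Bigr)\Bigl(\sum_{k=0}^{r}n_k X^k\Bigr)=\sum_{i,k}n_k a_i X^{k-i},$$
the fractional part keeps only terms with $k-i\le -1$. Re-indexing by $\ell=i-k\ge 1$ and using the convention $a_i=0$ for $i<j$ adopted in the statement, I obtain
$$\langle\theta\, n(X)\rangle=\sum_{\ell\ge 1}\Bigl(\sum_{k=0}^{r}n_k a_{k+\ell}\Bigr)X^{-\ell}.$$
Evaluating $X=p$ after identifying the $\FF_p$-coefficients with $\{0,1,\ldots,p-1\}$, the $\ell$-th base-$p$ digit of ${\left<\theta n(X)\right>}|_p$ is the residue $\sum_{k=0}^{r}n_k a_{k+\ell}\pmod p$. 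The $\ell$-th row of $H(\theta)$ is $(a_\ell,a_{\ell+1},a_{\ell+2},\ldots)$, so this residue is precisely the $\ell$-th entry of $H(\theta)\cdot\vec n$ computed in $\FF_p$. Hence $C_2=H(\theta)$ generates the second coordinate, completing the identification.

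The calculation itself is routine; the only thing to be careful about is bookkeeping. In particular I would double-check that the indexing conventions match (the rows of $H(\theta)$ indexed by $\NN$ and its columns together with $\vec n$ indexed by $\NN_0$, both starting at the correct position), and that the passage from $\FF_p$-coefficients to base-$p$ digits of a real number is consistent with the symbolic substitution $X=p$. There is no conceptual obstacle beyond this notational care.
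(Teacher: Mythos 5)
Your verification matches the paper's own proof, which performs exactly the same computation $\langle\theta\, n(X)\rangle=\sum_{\ell\geq 1}\bigl(\sum_{k\geq 0}a_{\ell+k}n_k\bigr)X^{-\ell}$ and declares the remaining identification (including the $\varphi_p$/identity-matrix part you spell out) straightforward. Your write-up is correct and only more explicit about the bookkeeping.
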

\begin{proof}
	Note that 
	$$\langle \theta n(X)\rangle=\langle \left(\sum_{i=1}^\infty a_iX^{-i}\right) \left(\sum_{r=0}^\infty n_rX^r\right)\rangle=\sum_{j=1}^\infty\left(\sum_{l=0}^\infty a_{j+l}n_l\right)X^{-j}. $$
	The rest is straightforward.
	
\end{proof}

The next step is to quantify the $t$-value for van der Corput--Kronecker-type sequences which are based on counterexample to the $X$-adic Littlewood conjecture. Previous work related the $t$-value of one-dimensional Kronecker-type sequences to the degrees of the coefficients in the simple continued fraction expansion of $\theta$ (see e.g. \cite{LarNie1993} and \cite{HoferdigKH}). Corollary~\ref{coro:1} at the end of this section quantifies this $t$-value for van der Corput--Kronecker-type sequences, which are based on counterexample to the $X$-adic Littlewood conjecture, via a uniform bound for the degrees of the coefficients in the simple continued fraction expansions of $\langle X^r\theta\rangle,\,r\geq 0$. 
For the sake of self-consistency, the basic properties of the simple continued fraction expansion of a formal Laurentseries over $\FF_p$ are summarized. We denote the simple continued fraction expansion of $\theta$ by $\theta=[A_0;A_1,A_2,\ldots]$ with polynomials $A_0=\theta-\langle\theta \rangle$ and $A_i\in\FF_p(X),\,i\geq 1$ of degrees $\geq 1$. 
For $h\geq 1$ the $h$th convergent $P_h/Q_h$ of $\theta$ is a rational function in $\FF_p(X)$ defined by $[A_0;A_1,\ldots,A_h]=:P_h/Q_h$, with $P_h,Q_h\in\FF_p[X]$ satisfying $\gcd(P_h,Q_h)=1$. The degree $\deg(Q_h)$ of $Q_h$ is often abbreviated to $d_h$ and satisfies $d_h=\sum_{i=1}^h\deg(A_i)$. Furthermore, $\deg(\theta-P_h/Q_h)=-d_h-d_{h+1}$ or equivalently $\deg(\langle Q_h \theta\rangle)=-d_{h+1}$ for every $h\geq 1$. For all $k\in\FF_p[X]$ with $d_h\leq \deg(k)<d_{h+1}$ we have $$\deg(\theta-b/k)\geq \deg(\theta-P_h/Q_h) \mbox{ for all $b\in\FF_p[X]$}$$ or equivalently $\deg(\langle k\theta\rangle)\geq \deg(\langle Q_h\theta\rangle)=-d_{h+1}$. In order to refer to the different magnitudes, as $A_i,\,d_h,\,Q_h$ etc. for $\langle X^r \theta\rangle$ instead of $\theta$ we add a superscript $(r)$. For example $[A^{(r)}_1,A^{(r)}_2,\ldots]$ denotes the simple continued fraction expansion of $\langle X^r \theta\rangle$ and $d^{(r)}_h=\sum_{i=1}^h\deg(A^{(r)}_i)$. (For concise information on continued fractions and convergents in $\FF_p((X^{-1}))$ we refer the interested reader to e.g. the Appendix B of \cite{niesiam}.)

Lemma~\ref{prop:1} gives an important property about the rank structure of the Hankel matrix $H(\theta)$ determined by a Laurent series $\theta\in\FF_p((X^{-1}))$. This property depends on the continued fraction of $\theta$ more exactly on the degrees of the best approximation denomiators. 

\begin{lemma}\label{prop:1}
	Let $\theta=\sum_{k=j}^\infty a_kX^{-k}$ be any formal Laurent series over $\FF_p$. Let $m\in\NN$. Then
	the upper left $m\times m$ submatrix of $H(\theta)$, 
	$$\begin{pmatrix}
		a_1&a_2&\cdots&a_m\\
		a_2&a_3&\cdots&a_{m+1}\\
		\vdots&\vdots&&\vdots\\
		a_{m}&a_{m+1}&\cdots&a_{m+m-1} 
	\end{pmatrix} $$
	over $\FF_p$ is regular if and only if $m=d_h=\deg(Q_h)$ for an $h\in\NN$. 
\end{lemma}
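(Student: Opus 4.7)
The plan is to translate the rank question into one about small polynomial multiples of $\theta$, and then to invoke the best-approximation properties of the convergents $P_h/Q_h$ that are recalled just before the lemma statement.

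First I would unpack the matrix-vector product. Using the identity derived in the proof of Lemma~\ref{ref:identifymatrices},
\begin{equation*}
\langle \theta Q(X)\rangle=\sum_{j=1}^\infty\left(\sum_{l=0}^\infty a_{j+l}n_l\right)X^{-j},
\end{equation*}
setting $Q(X)=\sum_{l=0}^{m-1}n_lX^l$ and $\vec n=(n_0,\ldots,n_{m-1})^T$ shows that the product of the displayed $m\times m$ Hankel submatrix with $\vec n$ is exactly the column of coefficients of $X^{-1},X^{-2},\ldots,X^{-m}$ in $\langle Q\theta\rangle$. Consequently the submatrix is singular iff there exists a nonzero $Q\in\FF_p[X]$ with $\deg Q\le m-1$ and $\deg\langle Q\theta\rangle\le -m-1$.

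Now I would check both directions against the continued-fraction inequalities stated in the paper. If $m=d_h$ for some $h\ge 1$, then every nonzero $Q$ with $\deg Q\le m-1$ has either $\deg Q<d_1$, in which case a direct leading-term calculation using that $\langle\theta\rangle$ has leading term in $X^{-d_1}$ gives $\deg\langle Q\theta\rangle=\deg Q-d_1\ge-d_1\ge-d_h$, or $d_i\le\deg Q<d_{i+1}$ for some $1\le i<h$, in which case the paper's bound gives $\deg\langle Q\theta\rangle\ge-d_{i+1}\ge-d_h$. In either case $\deg\langle Q\theta\rangle\ge -m>-m-1$, the kernel is trivial, and the submatrix is regular. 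Conversely, if $m\notin\{d_1,d_2,\ldots\}$, then either $0<m<d_1$, in which case $Q=1$ satisfies $\deg Q=0\le m-1$ and $\deg\langle\theta\rangle=-d_1\le-m-1$, or $d_h<m<d_{h+1}$ for some $h\ge 1$, in which case $Q=Q_h$ satisfies $\deg Q_h=d_h\le m-1$ and $\deg\langle Q_h\theta\rangle=-d_{h+1}\le-m-1$. Either way we obtain a nonzero element in the kernel, so the submatrix is singular.

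The main obstacle, which is minor, is that the initial interval $\deg Q<d_1$ is not covered by the paper's summary of convergent inequalities; but this reduces to a one-line leading-term observation: for $\deg Q<d_1$ the product $Q\langle\theta\rangle$ has degree $\deg Q-d_1<0$, so it equals its own fractional part and agrees with $\langle Q\theta\rangle$. Once this is in hand the proof is essentially a bookkeeping exercise combining the bijection between kernel vectors and polynomials $Q$ with the best-approximation list produced by the simple continued fraction expansion of $\theta$.
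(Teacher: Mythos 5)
Your proof is correct and takes essentially the same route as the paper's: both translate kernel vectors of the Hankel block into nonzero polynomials $Q$ with $\deg Q<m$ and $\deg\langle Q\theta\rangle<-m$, and then settle both directions via the best-approximation property of the convergents ($Q=Q_h$ giving singularity when $d_h<m<d_{h+1}$, and the lower bound $\deg\langle Q\theta\rangle\geq -d_{i+1}$ giving regularity when $m=d_h$). The separate leading-term argument you give for $\deg Q<d_1$ is just the case $i=0$ (with $Q_0=1$, $d_0=0$) of the inequality the paper already invokes, so it is a harmless but unnecessary extra case.
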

\begin{proof}
	We observe for a linear combination of the columns of the matrix that 
	$$b_0\begin{pmatrix}
		a_1\\
		a_2\\
		\vdots\\
		a_{m}
	\end{pmatrix} +b_1\begin{pmatrix}
		a_2\\
		a_3\\
		\vdots\\
		a_{m+1}
	\end{pmatrix}+\cdots + b_{m-1}\begin{pmatrix}
		a_m\\
		a_{m+1}\\
		\vdots\\
		a_{m+m-1}
	\end{pmatrix} =\begin{pmatrix}
		0\\
		0\\
		\vdots\\
		0
	\end{pmatrix}$$
	with $b_0,b_1,\ldots,b_{m-1}\in\FF_p$ if and only if 
	$$\deg(\langle(b_0+b_1X+\cdots+b_{m-1}X^{m-1})\theta\rangle)< -m.$$
	
	Let $m=d_h$. We know $\deg(\langle Q_{h-1}\theta\rangle)=-d_{h}$ and for all $P\in\FF_p[X]\setminus\{0\}$ with $\deg(P)<d_{h}$ we have $\deg(\langle P\theta\rangle)\geq \deg(\langle Q_{h-1}\theta\rangle)=-d_{h}$. From the observation above we derive the linear independence of the columns for such $m$. 
	Now if $d_{h-1}<m<d_h$ use $P=Q_{h-1}$ with $\deg P<m$ but $\nu(\{P\theta\})=-d_h$. Hence the columns are linearly depending. 
\end{proof}

 The following Lemma~\ref{lem:Counterexample} quantifies the constant $c$ in $$\inf_{r\geq 0,Q\in\FF_p[X]\setminus\{\boldsymbol{0}\}} |Q| \cdot\|X^rQ\theta\|\geq c$$ via a uniform bound for the degrees of the coefficients in the simple continued fraction expansions of $\langle X^r\theta\rangle,\,r\geq 0$ and vice versa. 
\begin{lemma}\label{lem:Counterexample}
Let $\theta\in\FF_p((X^{-1}))$. The series $\theta$ is a counterexample to the $X$-adic Littlewood conjecture in $\FF_p((X^{-1}))$ if and only if the continued fraction expansion $[A^{(r)}_1,A^{(r)}_2,\ldots]$ of $\langle X^r\theta\rangle$ satisfies $\deg(A^{(r)}_j)\leq D(\theta)+1$ for every $j\geq 1$ and $r\geq 0$, where $D(\theta)\geq 0$ is a fixed constant in $\NN_0$. We say then $\theta$ is of finite deficiency $D(\theta)$, and for such $\theta$ we have $$\inf_{r\geq 0,Q\in\FF_p[X]\setminus\{\boldsymbol{0}\}} |Q| \cdot\|X^rQ\theta\|\geq 2^{-(D(\theta+1))}>0.$$ 
\end{lemma}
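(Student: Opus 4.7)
The plan is to reduce the bilateral infimum over $r$ and $Q$ to a one-sided infimum over the fractional parts $\theta_r:=\langle X^r\theta\rangle$, and then to compute each such one-sided infimum explicitly in terms of the partial quotients of the simple continued fraction of $\theta_r$, using exactly the best-approximation facts that were collected in the paragraph preceding the lemma statement.

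First I would observe that since $X^r\theta-\theta_r\in\FF_p[X]$, multiplying by $Q\in\FF_p[X]$ changes $QX^r\theta$ by a polynomial, so $\|QX^r\theta\|=\|Q\theta_r\|$. Hence
$$\inf_{r\geq 0,\,Q\neq 0}|Q|\cdot\|QX^r\theta\|
=\inf_{r\geq 0}\Bigl(\inf_{Q\neq 0}|Q|\cdot\|Q\theta_r\|\Bigr).$$
Next, I would fix $r$ and compute the inner infimum. For any $Q\in\FF_p[X]\setminus\{0\}$ there is a unique $h\geq 0$ with $d_h^{(r)}\leq \deg Q<d_{h+1}^{(r)}$ (with $d_0^{(r)}=0$), and the best-approximation property gives $\deg\langle Q\theta_r\rangle\geq -d_{h+1}^{(r)}$, so
$$|Q|\cdot\|Q\theta_r\|\geq 2^{\deg Q-d_{h+1}^{(r)}}\geq 2^{d_h^{(r)}-d_{h+1}^{(r)}}=2^{-\deg A_{h+1}^{(r)}}.$$
Equality is attained at $Q=Q_h^{(r)}$ because $\deg Q_h^{(r)}=d_h^{(r)}$ and $\deg\langle Q_h^{(r)}\theta_r\rangle=-d_{h+1}^{(r)}$, both recalled in the paragraph preceding the lemma. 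Taking the infimum over $h\geq 0$ therefore yields
$$\inf_{Q\neq 0}|Q|\cdot\|Q\theta_r\|=2^{-\sup_{j\geq 1}\deg A_j^{(r)}},$$
with the convention that the right-hand side equals $0$ when the supremum is infinite.

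Taking now the further infimum over $r\geq 0$, I obtain the master formula
$$\inf_{r\geq 0,\,Q\neq 0}|Q|\cdot\|QX^r\theta\|=2^{-\,\sup_{r\geq 0,\,j\geq 1}\deg A_j^{(r)}}.$$
From this, both directions of the equivalence are immediate: the left-hand side is strictly positive iff the quantity $D^*:=\sup_{r,j}\deg A_j^{(r)}$ is finite; setting $D(\theta):=D^*-1$ (which is a nonnegative integer since $\deg A_j^{(r)}\geq 1$ for all $j\geq 1$) gives the uniform bound $\deg A_j^{(r)}\leq D(\theta)+1$ together with the explicit lower estimate $\inf\geq 2^{-(D(\theta)+1)}$, and conversely any uniform bound on the partial quotient degrees produces such a positive constant.

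There is no genuine obstacle in the argument; the only point requiring care is the precise bookkeeping that identifies $D(\theta)+1$ with $\sup_{r,j}\deg A_j^{(r)}$, and the verification that the lower bound $2^{-\deg A_{h+1}^{(r)}}$ is actually attained by the convergent denominators $Q_h^{(r)}$, so that the estimate cannot be improved inside each interval of denominator degrees.
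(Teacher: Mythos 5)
Your proposal is correct and follows essentially the same route as the paper: both directions rest on evaluating $|Q|\cdot\|QX^r\theta\|$ at the convergent denominators $Q_h^{(r)}$ of $\langle X^r\theta\rangle$ and on the best-approximation inequality $\deg\langle Q\theta_r\rangle\geq -d_{h+1}^{(r)}$ for $d_h^{(r)}\leq\deg Q<d_{h+1}^{(r)}$, exactly as in the paper's proof. The only difference is presentational: you package the two implications into a single exact formula $\inf_{r,Q}|Q|\cdot\|QX^r\theta\|=2^{-\sup_{r,j}\deg A_j^{(r)}}$, while the paper argues the two directions separately.
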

\begin{proof}
	Let $\theta$ be a counterexample to the $X$-adic Littlewood conjecture, i.e. $$\inf_{r\geq 0,Q\in\FF_p[X]\setminus\{\boldsymbol{0}\}} |Q| \cdot\|X^rQ\theta\|\geq c>0.$$
	We fix $r\in\NN_0$ and $Q=Q_h^{(r)}$ as a best approximation denominator of $\langle X^r \theta\rangle$. Then $\deg(\langle Q_h^{(r)}X^r \theta\rangle)=-d^{(r)}_{h+1}$, $\deg(Q_h^{(r)})=d^{(r)}_{h}$, and 
	$|Q| \cdot\|X^rQ\theta\|=2^{d^{(r)}_{h}-d^{(r)}_{h+1}}=2^{-\deg(A^{(r)}_{h+1})}\geq c$. Hence $\deg(A^{(r)}_{h}+1)\leq \log_2(1/c)$ for every $r\in\NN_0$ and $h\in\NN_0$. Conversely, if $\deg(A^{(r)}_j)\leq D(\theta)+1$ for every $j\in\NN$ and $r\in\NN_0$. Set $c:=2^{-(D(\theta)+1)}$. For any $r\geq 0$ and $Q\in\FF_p[X]\setminus\{\boldsymbol{0}\}$ choose the unique $h\in\NN_0$ such that $d^{(r)}_h\leq \deg(Q)<d^{(r)}_{h+1}$, then 
	$$\deg(Q)\deg(\langle QX^r \theta\rangle)\geq -d^{(r)}_{h+1}+d^{(r)}_h=-\deg(A^{(r)}_{h}+1)\geq -(D(\theta)+1).$$
	Thus 
	$$|Q|\cdot |\langle QX^r \theta\rangle|\geq 2^{-d^{(r)}_{h+1}+d^{(r)}_h}=2^{-\deg(A^{(r)}_{h}+1)}\geq 2^{-(D(\theta)+1)}=c.$$	
\end{proof}

\textbf{Proof of Theorem~\ref{thm:upperbound}. 
}
We use for the counterexample $\theta$ to the $X$-adic Littlwood conjecture the finite deficiency $D(\theta)$ due to Lemma~\ref{lem:Counterexample}. Let $m\in\NN$ satisfying $m>D(\theta)$. We make use of Lemma~\ref{ref:identifymatrices} and of Definition~\ref{def:t}. We start with $d_1,d_2\in\NN_0$ such that $d_1+d_2=m-D(\theta)$. We focus on the $(m-D(\theta))\times m$-matrix consisting of the

left upper $d_1 \times m$-submatrix of $C_1=I$ together with the

left upper $d_2 \times m$-submatrix of $C_2=H(\theta)$ 

\noindent and show its full row rank $m-D(\theta)$. In the following we set $r=d_1$. As $C_1$ is the unit matrix, the problem reduces to the upper left $m-d_1-D(\theta)\times m-d_1$ submatrix of the Hankel matrix $H(\langle X^{r}\theta\rangle)$.  Now choose $Q^{(r)}_{h-1}$ with $\deg(Q^{(r)}_{h-1})=d^{(r)}_{h-1}$ such that 
$d_2=m-d_1-D(\theta)\leq d^{(r)}_{h-1}\leq m-d_1$. Such a $d^{(r)}_{h-1}$ exists as the finite deficiency $D(\theta)$ of $\theta$ is assumed. Suppose $d^{(r)}_{h-1}< m-d_1-D(\theta)$ and $d^{(r)}_h> m-d_1$. Then $\deg(A^{(r)}_h)=d^{(r)}_h-d^{(r)}_{h-1}\geq m-d_1+1-(m-d_1-D(\theta)-1)=D(\theta)+2$ in contradiction to the finite deficiency $D(\theta)$. 
From Lemma~\ref{prop:1} we derive the regularity of the upper left $d^{(r)}_{h-1}\times d^{(r)}_{h-1}$ submatrix of $H(\langle X^{r}\theta\rangle)$. The first part of  Theorem~\ref{thm:upperbound} immediately follows from basic linear algebra. The low-discrepancy bound then follows e.g. from \cite[Theorem~4.17]{niesiam}. \qed

The above proof also covers the following corollary, which should definitely be noted here.  

\begin{coro}\label{coro:1}
	Let $\theta\in\FF_p((X^{-1}))$ be a counterexample to the $X$-adic Littlewood conjecture. Let $D(\theta)\in\NN_0$ be its finite deficiency due to Lemma~\ref{lem:Counterexample}. Then the matrices $I$ and $H(\theta)$ over $\FF_p$ are qualified to construct a digital $(D(\theta),2)$-sequence over $\FF_p$. 
	\end{coro}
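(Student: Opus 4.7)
The plan is to observe that the corollary is essentially a structural restatement of what is established in the course of proving Theorem~\ref{thm:upperbound}. The low-discrepancy bound there follows from the fact that the sequence $((\varphi_p(n),\langle\theta n(X)\rangle|_p))_{n\ge 0}$ can be identified, via Lemma~\ref{ref:identifymatrices}, with the digital sequence generated by $C_1=I$ and $C_2=H(\theta)$, together with a verification that this digital sequence has $t$-value at most $D(\theta)$. The corollary simply records this last fact explicitly.

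Concretely, I would proceed as follows. By Lemma~\ref{ref:identifymatrices}, it suffices to check that $I$ and $H(\theta)$ satisfy the rank condition in Definition~\ref{def:t} with parameter $t=D(\theta)$. So I fix $m>D(\theta)$ and $d_1,d_2\in\NN_0$ with $d_1+d_2\le m-D(\theta)$, and look at the $(d_1+d_2)\times m$ matrix formed by the top $d_1$ rows of $I$ stacked with the top $d_2$ rows of $H(\theta)$. Since the first $d_1$ rows come from the identity, row-reducing kills the first $d_1$ columns of the $H(\theta)$-block; what remains to prove is that the $d_2\times(m-d_1)$ submatrix obtained by deleting the first $d_1$ columns from the top $d_2$ rows of $H(\theta)$ has full row rank $d_2$. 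But this submatrix is precisely the upper-left $d_2\times(m-d_1)$ block of the Hankel matrix $H(\langle X^{d_1}\theta\rangle)$, writing $r=d_1$.

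The main step is now the same argument given in the proof of Theorem~\ref{thm:upperbound}. By Lemma~\ref{lem:Counterexample}, the finite deficiency hypothesis gives $\deg(A^{(r)}_j)\le D(\theta)+1$ for all $j\ge 1$. A pigeonhole on the gaps $d^{(r)}_h-d^{(r)}_{h-1}\le D(\theta)+1$ therefore produces some convergent denominator $Q^{(r)}_{h-1}$ with $d_2\le d^{(r)}_{h-1}\le m-d_1$: if no such $h$ existed, one would find a jump exceeding $D(\theta)+1$, contradicting the bound. Lemma~\ref{prop:1} then yields regularity of the upper-left $d^{(r)}_{h-1}\times d^{(r)}_{h-1}$ submatrix of $H(\langle X^r\theta\rangle)$, and restricting to its top $d_2$ rows (with the same or more columns, $d_2\le d^{(r)}_{h-1}\le m-d_1$) preserves row independence. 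This gives the required full row rank.

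I do not foresee a real obstacle here: the argument is a direct extraction from the Theorem~\ref{thm:upperbound} proof. The only delicate point to state cleanly is the pigeonhole choice of $h$ ensuring $d_2\le d^{(r)}_{h-1}\le m-d_1$ (which uses $d_1+d_2\le m-D(\theta)$ together with the gap bound $D(\theta)+1$). Once that index $h$ is in hand, Lemma~\ref{prop:1} closes the argument and, ranging over all valid $d_1,d_2,m$, establishes that $I$ and $H(\theta)$ generate a digital $(D(\theta),2)$-sequence over $\FF_p$.
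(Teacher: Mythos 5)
Your proposal is correct and follows essentially the same route as the paper: the corollary is extracted from the proof of Theorem~\ref{thm:upperbound}, reducing via the identity block to the upper-left submatrix of $H(\langle X^{d_1}\theta\rangle)$, locating a convergent denominator degree $d^{(d_1)}_{h-1}$ with $d_2\le d^{(d_1)}_{h-1}\le m-d_1$ using the deficiency bound, and invoking Lemma~\ref{prop:1} plus basic linear algebra. The only cosmetic difference is that you treat all $d_1+d_2\le m-D(\theta)$ directly, whereas the paper argues the extremal case $d_1+d_2=m-D(\theta)$; both are fine.
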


\section{Proof of Theorem~\ref{thm:lowerbound}}\label{sec:3}

Following the ideas of Levin \cite{Levints1} we will derive a lower bound for the discrpancy of a suitable three dimensional net in order to obtain the lower bound in Theorem \ref{thm:lowerbound}. 
Let $m\in\NN$. We regard the three $\NN\times m$ matrices $I^{(m)},H^{(m)}(\theta)$ and $J^{(m)}$ over $\FF_p$. Here $I^{(m)}$ are the first $m$ columns of the $\NN \times \NN_0 $ unit matrix $I$, $H^{(m)}(\theta)$ consists of the first $m$ columns of $H(\theta)$, and $$J^{(m)}=\begin{pmatrix}
0&0&\cdots&0&1\\
0&0&\cdots&1&0\\
\vdots&\vdots&&\vdots&\vdots\\
0&1&\cdots&0&0 \\
1&0&\cdots&0&0 \\
0&0&\cdots&0&0\\
0&0&\cdots&0&0\\
\vdots&\vdots&&\vdots&\vdots
\end{pmatrix} $$ is the $\NN\times m$ \emph{upper antidiagonal matrix}. 
The following Lemma~\ref{lem:1} follows from Corollary~\ref{coro:1} using e.g. \cite[Lemma~4.38]{DP}. 
\begin{lemma}\label{lem:1}
Let $\theta\in\FF_p((X^{-1}))$ be a counterexample to the $X$-adic Littlewood conjecture over $\FF_p$ with finite deficiency $D(\theta)$ due to Lemma~\ref{lem:Counterexample}. Then for every $m>D(\theta)$ the three matrices $I^{(m)},H^{(m)}(\theta)$ and $J^{(m)}$ are qualified to construct a digital $(D(\theta),m,3)$-net over $\FF_p$. 
\end{lemma}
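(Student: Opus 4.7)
The plan is to deduce Lemma~\ref{lem:1} from Corollary~\ref{coro:1} via the standard \emph{antidiagonal trick} that promotes a digital $(t,2)$-sequence over $\FF_p$ to a digital $(t,m,3)$-net: the new coordinate is generated by the antidiagonal matrix $J^{(m)}$, while the two sequence generators are truncated to their first $m$ columns. This is exactly \cite[Lemma~4.38]{DP}, but in our setting a short direct verification is available.

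I would fix $m > D(\theta)$ and $d_1,d_2,d_3 \in \NN_0$ with $d_1+d_2+d_3 \leq m - D(\theta)$, and analyse the $(d_1+d_2+d_3)\times m$ matrix $M$ obtained by stacking the top $d_1$ rows of $J^{(m)}$, the top $d_2$ rows of $I^{(m)}$, and the top $d_3$ rows of $H^{(m)}(\theta)$. The key observation is the block structure forced by the supports: the top $d_1$ rows of $J^{(m)}$ are supported only in columns $m-d_1+1,\ldots,m$; the top $d_2$ rows of $I^{(m)}$ are supported only in columns $1,\ldots,d_2$, and since $d_1+d_2\leq m$ these lie entirely within the left $m-d_1$ columns. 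Splitting columns accordingly, $M$ takes the form
$$
M \;=\; \begin{pmatrix} 0 & P \\ I' & 0 \\ A & B \end{pmatrix},
$$
where $P$ is the $d_1 \times d_1$ reversed identity (an invertible permutation matrix), $I'$ is the $d_2 \times (m-d_1)$ matrix with a leading identity block, and $A$ is the upper-left $d_3 \times (m-d_1)$ submatrix of $H(\theta)$.

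Invertibility of $P$ reduces the claim to showing that $\bigl(\begin{smallmatrix} I' \\ A \end{smallmatrix}\bigr)$ has full row rank $d_2+d_3$. If $d_2=d_3=0$ this is vacuous; otherwise $d_1 \leq m-D(\theta)-1$, hence $\tilde m := m-d_1 > D(\theta)$ and $d_2+d_3 \leq \tilde m-D(\theta)$. But this is precisely the digital $(D(\theta),2)$-sequence condition from Definition~\ref{def:t} applied to the generators $I$ and $H(\theta)$ at parameter $\tilde m$, which holds by Corollary~\ref{coro:1}. It follows that $M$ has full row rank $d_1+d_2+d_3$, establishing the net property.

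The only real obstacle is the careful bookkeeping of which columns support which blocks; once the decomposition above is in hand, the net condition cleanly reduces to the known sequence condition from Corollary~\ref{coro:1} with no additional input needed.
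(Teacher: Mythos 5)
Your argument is correct and is essentially the paper's own approach: the paper deduces Lemma~\ref{lem:1} from Corollary~\ref{coro:1} by citing \cite[Lemma~4.38]{DP}, which is exactly the antidiagonal trick you use. Your block decomposition (kill $\beta,\gamma$ on the left $m-d_1$ columns via the $(D(\theta),2)$-sequence property at parameter $\tilde m=m-d_1>D(\theta)$, then kill $\alpha$ via the invertible reversed identity $P$) is a sound direct verification of that cited lemma, so no additional input is needed.
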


We will derive a lower bound for the star-discrepancy of this digital $(D(\theta),m,3)$-net which is stated in the subsequent Proposition~\ref{thm:3}. 

\begin{prop}\label{thm:3}
Let $\theta\in\FF_p((X^{-1}))$ be a counterexample to the $X$-adic Littlewood conjecture over $\FF_p$ with finite deficiency $D(\theta)$ due to Lemma~\ref{lem:Counterexample}. Let $m$ be a multiple of $8v$ with $v=3(D(\theta)+1)$, and large enough. Then the three-dimensional digital pointset $(\bsx_n)_{0\leq n<p^m}$ constructed by $I^{(m)},H^{(m)}(\theta)$ and $J^{(m)}$ satisfies 
$$p^mD^*_{p^m}(\bsx_n)\geq c\, m^2$$
with a positive constant $c$ depending on $\theta$ but independent of $m$. 
\end{prop}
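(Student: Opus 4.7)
My plan is to follow the approach of Levin \cite{Levints1} for proving $\Omega(\log^s N)$-type lower bounds for digital sequences, adapted to our specific three-dimensional net. A preliminary observation is that the antidiagonal matrix $J^{(m)}$ produces the third coordinate $n/p^m$ for $0\le n<p^m$, so the net takes the explicit form
$$\bsx_n \;=\; \bigl(\varphi_p(n),\; \langle \theta\, n(X)\rangle|_p,\; n/p^m\bigr).$$
Thus the 3D net is the ``Hammersley-meets-Kronecker'' object, precisely the structure Levin treats in \cite{Levints1}.

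The witness boxes I would construct have the form $B=[0,p^{-a})\times[0,\beta)\times[0,\beta')$. The first-coordinate restriction $\varphi_p(n)<p^{-a}$ is equivalent to $p^a\mid n$; the third restriction $n/p^m<\beta'$ selects an initial segment; writing $n=p^a k$ reduces the count in $B$ to a 1D counting problem for the first $\lfloor\beta' p^{m-a}\rfloor$ terms of the Kronecker-type sequence generated by $\langle X^a\theta\rangle$. Since $\theta$ is a counterexample to the $X$-adic Littlewood conjecture with finite deficiency $D(\theta)$, Lemma~\ref{lem:Counterexample} guarantees that the continued-fraction partial quotients of $\langle X^a\theta\rangle$ are uniformly bounded by $D(\theta)+1$ for every $a\ge 0$. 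Each sub-sequence is therefore of bounded-partial-quotient Kronecker type, so Example~\ref{ex:1}(2) yields, at each scale $a$, a witness with counting defect of order $m-a$.

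To upgrade from the $\Omega(m)$ defect obtainable at a single scale to the $\Omega(m^2)$ defect demanded by the proposition, the contributions from $\Theta(m)$ different scales must be accumulated inside one box $B^*$; since the star-discrepancy is a supremum rather than a sum, this accumulation is delicate and constitutes the technical heart of the proof. I would Walsh-decompose the indicator of the candidate box and isolate a family of $\Theta(m)$ equally-spaced scale levels (the decomposition for which the hypothesis $8v\mid m$ with $v=3(D(\theta)+1)$ is tailored), showing that the corresponding character-sum defects of the digital net have controlled signs via the Hankel rank structure of $H^{(m)}(\theta)$ provided by Lemma~\ref{prop:1}. The main obstacle is to verify that these level contributions combine constructively: the coordinates $\beta,\beta'$ of $B^*$ must be chosen so that the bounded-partial-quotient anomaly of every $\langle X^a\theta\rangle$, for $a$ in the chosen scale family, is witnessed simultaneously, without the sign-cancellations typical of Kronecker sequences. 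Once constructive accumulation is secured, the single box $B^*$ yields the announced bound $p^mD^*_{p^m}(\bsx_n)\ge cm^2$.
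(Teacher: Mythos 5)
Your reduction at a single scale is fine (the box $[0,p^{-a})\times[0,\beta)\times[0,\beta')$ does reduce to a one-dimensional count for the Kronecker-type sequence of $\langle X^a\theta\rangle$), but that only ever yields a defect of order $m-a$, i.e.\ $\Omega(m)$, and you acknowledge that the entire content of the proposition lies in accumulating $\Theta(m)$ such scales inside \emph{one} box. That step is exactly what your proposal does not carry out: you announce a Walsh decomposition, ``controlled signs via the Hankel rank structure'' and ``constructive accumulation'', but you neither construct the box $B^*$ nor prove any sign coherence, and for Kronecker-type character sums the signs do in general oscillate, so this is not a routine verification — it is the theorem. As it stands the argument has a genuine gap precisely at its declared technical heart.

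It is instructive to contrast this with how the paper (following Levin \cite{Levints1}) sidesteps the sign problem altogether. One fixes a corner point whose digit vectors $\vec\gamma^{(i)}$ are periodic blocks $\vec j_v=(0,\dots,0,1)$ of length $v=3(D(\theta)+1)$, shifted by a parameter $u$ chosen via Lemma~\ref{prop:1} so that exactly one point $\bsx_{\overline n}$ of the net matches all three truncated coordinates. Decomposing $J=[0,\gamma^{(1)})\times[0,\gamma^{(2)})\times[0,\gamma^{(3)})$ into elementary intervals, every interval of order $j_1+j_2+j_3\le m-D(\theta)$ has local discrepancy exactly zero by the digital $(D(\theta),m,3)$-net property, while every interval of order $>m-D(\theta)$ occurring in the decomposition is shown to be \emph{empty}: any point in it would lie within digital distance $p^{-(m+v-u)}$ of $\bsx_{\overline n}$, contradicting the $D(\theta)+3$-admissibility of the net (Lemma~\ref{lem:admissible}), a minimum-distance property your proposal never invokes. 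Each empty interval then contributes the fixed negative amount $-p^{u-v}$, all contributions have the same sign automatically, and counting the $\asymp(m/(8v))^2$ admissible triples $(j_1,j_2,j_3)$ with $j_1+j_2+j_3=m+v-u$ gives the bound $-cm^2$. If you want to complete your route, you would need either to reproduce this empty-cell mechanism or to prove the sign-coherence claim for your Walsh coefficients, and the latter is not easier than the former.
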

Our Theorem~\ref{thm:lowerbound} follows then from Proposition~\ref{thm:3} together with the following Lemma~\ref{lem:2} (cf. \cite[Lemma 3.7]{niesiam}) and \eqref{eq:starvsnonstar}. 

Note that $J^{(m)}\vec{n}$ equals $(n_{m-1},\ldots,n_1,n_0,0\ldots)^T$ and therefore $x_n^{(3)}=\frac{n_{m-1}}{p}+\cdots+\frac{n_{1}}{p^{m-1}}+\frac{n_{0}}{p^m}=\frac{n}{p^m}$. 

\begin{lemma}\label{lem:2}
Let $S=(\bsx_n)_{n\geq 0}$ be an arbitrary sequence in $[0,1)^s$. Let $N=p^m$ and $P=(\bsy_n)_{n=0}^{p^m-1}$ with $\bsy_n:=(\bsx_n,n/p^m)\in[0,1)^{s+1}$. Then 
$$ND^*_N(\bsy_n)\leq \max_{1\leq M\leq N}MD^*_M(\bsx_n)+1.$$ 
\end{lemma}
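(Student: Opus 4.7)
The plan is to reduce the $(s+1)$-dimensional star-discrepancy question about $(\bsy_n)$ to a family of $s$-dimensional star-discrepancy questions about $(\bsx_n)$, one for each prefix length $M\in\{1,\ldots,N\}$, exploiting that the appended coordinate $y_n^{(s+1)}=n/N$ is a deterministic function of the index $n$.

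First I would fix an arbitrary test box $J=\prod_{i=1}^{s+1}[0,b_i)$ and split it as $J=J'\times[0,\beta)$, where $J'=\prod_{i=1}^{s}[0,b_i)\subseteq[0,1)^s$ and $\beta=b_{s+1}\in[0,1]$. Because $y_n^{(s+1)}=n/N$, the condition $y_n^{(s+1)}\in[0,\beta)$ is equivalent to $n<\beta N$, i.e.\ $n\in\{0,1,\ldots,M-1\}$ with
$$M:=\lceil \beta N\rceil\quad (\text{and }M:=0\text{ if }\beta=0).$$
The key inequality recorded at this stage is $0\le M-\beta N<1$.

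Next I would insert the ``natural'' intermediate quantity $M\,\vol(J')$ between the count and the expected measure and apply the triangle inequality:
$$\bigl|\#\{0\le n<N:\bsy_n\in J\}-N\vol(J)\bigr|
=\bigl|\#\{0\le n<M:\bsx_n\in J'\}-\beta N\vol(J')\bigr|,$$
and then
$$\bigl|\#\{0\le n<M:\bsx_n\in J'\}-\beta N\vol(J')\bigr|
\le \bigl|\#\{0\le n<M:\bsx_n\in J'\}-M\vol(J')\bigr|+|M-\beta N|\,\vol(J').$$
The first term on the right is bounded by $M D^*_M(\bsx_n)\le \max_{1\le M\le N}M D^*_M(\bsx_n)$ by definition of star-discrepancy, while the second term is bounded by $1$ using $|M-\beta N|<1$ and $\vol(J')\le 1$.

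Finally I would take the supremum over all test boxes $J$ (equivalently, over all admissible pairs $(J',\beta)$) and separately dispose of the degenerate case $\beta=0$, where both the count and $N\vol(J)$ vanish. This gives the asserted bound $ND^*_N(\bsy_n)\le \max_{1\le M\le N}M D^*_M(\bsx_n)+1$. There is no genuine obstacle here; the only delicate point is the boundary rounding $|M-\beta N|\le 1$, which is precisely the source of the additive constant on the right-hand side.
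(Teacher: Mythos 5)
Your argument is correct and is exactly the standard proof of this reduction: the paper itself does not prove the lemma but simply cites \cite[Lemma~3.7]{niesiam}, whose proof proceeds by the same decomposition $J=J'\times[0,\beta)$, the identification $M=\lceil\beta N\rceil$, and the triangle inequality against the intermediate quantity $M\vol(J')$. The only point worth making explicit is that $\beta>0$ forces $M\geq 1$, so the first error term is indeed covered by $\max_{1\leq M\leq N}MD^*_M(\bsx_n)$; you handle this implicitly and the degenerate case does not actually occur under the paper's definition of the star-discrepancy.
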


Following ideas from Levin \cite{Levints1} we first study the so-called admissibility of the net. 
\begin{definition}\label{def:admissible}
We say a digital $(t,m,3)$-net over $\FF_p$ in the sense of Definition~\ref{def:t} is $d$-admissible if $$\min_{0\leq k<  n<p^m}\|\bsx_k\ominus \bsx_n\|_p>p^{-m-d}.$$
Here $\|\bsx_n\|_p=p^{-l}$ where $l=\sum_{i=1}^3\min\{j\in\NN\,:\,x_{n,j}^{(i)}\neq 0\}$ with $x_n^{(i)}=\sum_{j=1}^\infty x_{n,j}^{(i)}p^{-j}$. The minimum of the empty set is considered to be $\infty$. Further $\ominus$ denotes the componentwise digit-wise subtraction, i.e. $x^{(i)}_k\ominus x^{(i)}_n=:\sum_{j=1}^\infty \Big(x_{k,j}^{(i)}-x_{n,j}^{(i)}\pmod{p}\Big)p^{-j}$. 
\end{definition}
In words the admissibility guarantees a {digital minimum distance} between to distinct points of a digital $(t,m,3)$-net. The digital $(D(\theta),m,3)$-nets relevant in this paper satisfy such a digital minimum distance. 

\begin{lemma}\label{lem:admissible}
Let $\theta\in\FF_p((X^{-1}))$ be a counterexample to the $X$-adic Littlewood conjecture over $\FF_p$ with finite deficiency $D(\theta)$ due to Lemma~\ref{lem:Counterexample}. Then for every $m>D(\theta)$ the digital $(D(\theta),m,3)$-net over $\FF_p$ constructed by $I^{(m)},H^{(m)}(\theta)$ and $J^{(m)}$ is $D(\theta)+3$ admissible. 
\end{lemma}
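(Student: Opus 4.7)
The plan is to exploit linearity of the digital construction: for $0\le k<n<p^m$, the point $\bsx_k\ominus\bsx_n$ is the digital point generated by the nonzero digit vector $\vec w:=\vec k-\vec n\pmod p\in\FF_p^m$. Thus $\|\bsx_k\ominus\bsx_n\|_p=p^{-(l_1+l_2+l_3)}$ where $l_i:=\min\{j\ge 1:(C_i\vec w)_j\neq 0\}$ for $C_1=I^{(m)}$, $C_2=H^{(m)}(\theta)$, $C_3=J^{(m)}$, and the asserted $d$-admissibility with $d=D(\theta)+3$ reduces to establishing the uniform bound $l_1+l_2+l_3\le m+D(\theta)+2$ over all nonzero $\vec w\in\FF_p^m$.

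First I would dispose of the two ``easy'' coordinates. Let $s:=\min\{i:w_i\neq 0\}$ and $r:=\max\{i:w_i\neq 0\}$ be the lowest and highest nonzero positions of $\vec w$. Direct inspection of $I^{(m)}$ gives $l_1=s+1$, and inspection of $J^{(m)}$, together with the fact that $J^{(m)}\vec w=(w_{m-1},w_{m-2},\dots,w_0,0,0,\dots)^T$, gives $l_3=m-r$. Hence $l_1+l_3=m+1-(r-s)$, and the problem reduces to showing $l_2\le D(\theta)+1+(r-s)$.

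The main obstacle, and the only nontrivial step, is this Hankel estimate. Here I would re-use the computation from the proof of Lemma~\ref{ref:identifymatrices} to identify the rows of $H^{(m)}(\theta)\vec w$ with the Laurent coefficients of $\langle \theta W(X)\rangle$, where $W(X):=\sum_{i=0}^{m-1}w_iX^i$; this identification yields $l_2=-\deg\langle \theta W\rangle$. Factoring $W=X^sQ$ with $Q\in\FF_p[X]\setminus\{0\}$ of degree $r-s$ and applying the finite-deficiency inequality of Lemma~\ref{lem:Counterexample} to the pair $(s,Q)$ gives $-\deg\langle X^sQ\theta\rangle\le D(\theta)+1+\deg Q=D(\theta)+1+(r-s)$, which is precisely the bound I need. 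The same hypothesis also rules out $\langle \theta W\rangle=0$, since otherwise $X^sQ\theta\in\FF_p[X]$ would force $\|X^sQ\theta\|=0$, contradicting the strictly positive lower bound of Lemma~\ref{lem:Counterexample}; hence $l_2$ is finite. Summing the three bounds yields $l_1+l_2+l_3\le m+D(\theta)+2$, so $\|\bsx_k\ominus\bsx_n\|_p\ge p^{-m-D(\theta)-2}>p^{-m-(D(\theta)+3)}$, establishing $(D(\theta)+3)$-admissibility. Conceptually, $I^{(m)}$ and $J^{(m)}$ contribute the factor $m+1-(r-s)$, the $X$-adic Littlewood property contributes the factor $D(\theta)+1+(r-s)$, and the $(r-s)$ terms are designed to cancel.
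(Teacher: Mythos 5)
Your proof is correct and follows essentially the same route as the paper: reduce admissibility to bounding the digital weight of a single nonzero vector $\vec w$, read off the leading/trailing zero blocks forced by $I^{(m)}$ and $J^{(m)}$, and control the Hankel coordinate by applying the counterexample property of Lemma~\ref{lem:Counterexample} to $\langle X^{s}Q\theta\rangle$ with $W=X^{s}Q$. The only difference is presentational: the paper argues by contradiction via the column-dependence observation from Lemma~\ref{prop:1}, whereas you give the equivalent direct estimate $l_2\le D(\theta)+1+\deg Q$, which if anything is cleaner (and in fact yields the slightly stronger bound $\|\bsx_k\ominus\bsx_n\|_p\ge p^{-m-D(\theta)-2}$).
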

\begin{proof}From the construction method in Definition \ref{def:t} it is easy to see that it suffices to prove $\min_{0 < n<p^m}\|\bsx_n\|_p>p^{-m-D(\theta)-3}$. 
Suppose there exists $n$ such that $0<n<p^m$ and $\|\bsx_n\|_p\leq p^{-m-D(\theta)-3}$. We write $\|x^{(i)}_n\|_p=:p^{-l_i-1}$, $i=1,2,3$. Note then our assumption means $l_1+l_2+l_3+3\geq m+D(\theta)+3$. 
As $n\neq 0$ we easily see $x_n^{(1)}\neq 0$, $x_n^{(2)}\neq 0$, and $x_n^{(3)}\neq 0$. For the second, note that $\theta$ as a counterexample is non rational. From the matrices $I^{({m})}$ and $J^{(m)}$ we see that $\vec{n}$ starts with $l_1$ zero entries and ends with $l_3$ zero entries and the in-between-part has to start and end with a nonzero entry and is of length at least one, i.e. $m-l_1-l_3\geq 1$. 
From $\|\bsx^{(2)}_n\|_p=:p^{-l_2-1}$ we derive some linear dependency in the upper $l_2\times m$ submatrix of $H^{(m)}(\theta)$ using the observation at the beginning of the proof of Lemma~\ref{prop:1}. More exactly when deleting the first $l_1$ and the last $l_3$ columns of it, the columns in the remaining $l_2\times m-(l_1+l_3)$ matrix are linearly depending. But $l_2\geq m-(l_1+l_3)+D(\theta)$. Similar as in the proof of Lemma~\ref{prop:1} together with the assumption that $\theta$ is of bounded deficiency $D(\theta)$ we achieve the desired contradiction. 
\end{proof}

\textbf{Proof of Proposition~\ref{thm:3}: }

We use the notation of Definition~\ref{def:t}. For proving Proposition~\ref{thm:3} assume $m$ large enough and $m$ to be a multiple of $8v$ with $v=3(D(\theta)+1)$. We will construct an interval $$J=[0,\gamma^{(1)})\times [0,\gamma^{(2)})\times [0,\gamma^{(3)})\subseteq [0,1)^3$$ such that 
\begin{equation}\label{eq:lb} 
\#\{0\leq n<p^m:\bsx_n\in J\}-p^m\lambda (J)\leq - c m^2
\end{equation}
with a fixed constant $c$ only depending on $\theta$ or $D(\theta)$ resp.

We write $\gamma^{(i)}=\sum_{j=1}^{r_i}\gamma_j^{(i)}p^{-j}$ in base $p$ and identify it with the vector $\vec \gamma^{(i)}=(\gamma^{(i)}_1,\ldots,\gamma^{(i)}_{r_i})^T$. 
We define $\vec{j}_v=(0,\ldots,0,1)$ of length $v$. And set 
$$\vec \gamma^{(1)}=(\underbrace{\vec{j}_v,\vec{j}_v,\ldots,\vec{j}_v}_{\mbox{$m/(4v)$ many.}})^T,\quad r_1=m/4,$$
$$\vec \gamma^{(2)}=(\underbrace{0,\ldots,0}_{\mbox{of length $v-u$.}},\underbrace{\vec{j}_v,\vec{j}_v,\ldots,\vec{j}_v}_{\mbox{$m/(4v)-1$ many.}})^T,\quad r_2=m/4-u$$
with $u$ satisfying $D(\theta)\leq u < 2D(\theta)$ and which will be chosen later. Further set 
$$\vec \gamma^{(3)}=(\underbrace{0,0,\ldots,0}_{\mbox{$m/4+u$ many.}},\underbrace{\ldots,\ldots,\ldots}_{\mbox{$m/4-u$ many fitting to $\gamma^{(2)}$.}},\underbrace{\vec{j}_v,\vec{j}_v,\ldots,\vec{j}_v}_{\mbox{$m/(4v)$ many.}})^T,\quad r_3=3m/4.$$ 
We introduce the truncation operator to $x=\sum_{j=1}^{\infty}x_jp^{-j}$ for $r\in\NN$ as $[x]_r:=\sum_{j=1}^{r}x_jp^{-j}$. 

First we ensure for a proper chosen $u$ existence of exactly one $n\in\{0,1,\ldots,p^m-1\}$ such that $[x_n^{(i)}]_{r_i}=\gamma^{(i)}$ for $i=1,2,3$. As $C_1=I^{(m)}$ we have 
$[x_n^{(1)}]_{r_1}=\gamma^{(1)}$ if and only if $(n_0,\ldots,n_{r_1-1})^T=\vec \gamma^{(1)}$. Thus the first $m/4$ entries of $\vec{n}$ are uniquely determined. 
The first $m/4+u$ zeroes in $\vec{\gamma}^{(3)}$ and the $m/4$ entries in the $(\vec{j}_v,\ldots,\vec{j}_v)$ string of $\vec{\gamma}^{(3)}$ uniquely determine the last $m/4+u$ entries of $\vec n$ as well as the $m/4$ entries of $\vec n$ indexed by $m/4,m/4+1,\ldots, m/2-1$. 
So the remaining $m/4-u$ entries of $\vec{n}$ should guarantee $[x^{(2)}_n]_{r_2}=\gamma^{(2)}$. We will show by fixing $u$ that there is exactly one choice of the remaining $m/4-u$ entries of $\vec{n}$ such that $[x^{(2)}_n]_{r_2}=\gamma^{(2)}$. Since the first $m/2$ digits of $n$ are fixed already, we reduce the solvability of $[x^{(2)}_n]_{r_2}=\gamma^{(2)}$ to the condition that $H^{(m/4-u)}(\langle X^{m/2}\theta\rangle)$ is regular. We choose $D(\theta)\leq u <2D(\theta)$ such that $H^{(m/4-u)}(\langle X^{m/2}\theta\rangle)$ is regular. Such a $u$ always exists under the assumption that $\theta$ has finite deficiency $D(\theta)$ together with Lemma~\ref{prop:1}. These now uniquely determined $m/4-u$ remaining entries of $\vec{n}$ will be used to define the middle part of $\gamma^{(3)}$. 
We summarize, there is a uniquely determined ${\overline{n}}\in\{0,1,\ldots,p^m-1\}$ such that $[x_{\overline{n}}^{i}]_{r_i}=\gamma^{(i)}$ for $i=1,2,3$ or equivalently $\bsx_{\overline{n}}\in[\gamma^{(1)},\gamma^{(1)}+1/p^{r_1})\times [\gamma^{(2)},\gamma^{(2)}+1/p^{r_2})\times [\gamma^{(3)},\gamma^{(3)}+1/p^{r_3})$. 

Now we split up $J=[0,\gamma^{(1)})\times [0,\gamma^{(2)})\times [0,\gamma^{(3)})$ into a union of disjoint elementary intervals as follows. We write $\gamma^{(i)}= \sum_{j=1}^{r_i}\gamma_j^{(i)}p^{-j}$ and $J$ as 
$$J=\prod_{i=1}^3 \bigcup_{j=1}^{r_i}\bigcup_{k=1}^{\gamma_j^{(i)}}\left[\left.[\gamma^{(i)}]_{j-1}+(k-1)p^{-j},[\gamma^{(i)}]_{j-1}+kp^{-j}\right)\right.$$
or equivalently
$$J=\bigcup_{j_1=1}^{r_1}\bigcup_{j_2=1}^{r_2}\bigcup_{j_3=1}^{r_3}\bigcup_{k_1=1}^{\gamma_{j_1}^{(1)}}\bigcup_{k_2=1}^{\gamma_{j_2}^{(2)}}\bigcup_{k_3=1}^{\gamma_{j_3}^{(3)}}\underbrace{\prod_{i=1}^3\left[\left.[\gamma^{(i)}]_{j_i-1}+(k_i-1)p^{-j_i},[\gamma^{(i)}]_{j_i-1}+k_ip^{-j_i}\right)\right.}_{=:I(j_1,k_1,j_2,k_2,j_3,k_3).}$$

Note that $I(j_1,k_1,j_2,k_2,j_3,k_3)$ is well-defined only if $\gamma_{j_1}^{(1)}\neq 0$, $\gamma_{j_2}^{(2)}\neq 0$, and $\gamma_{j_3}^{(3)}\neq 0$. The volume of $I(j_1,k_1,j_2,k_2,j_3,k_3)$ is then $p^{-(j_1+j_2+j_3)}$. We also say the elementary interval $I(j_1,k_1,j_2,k_2,j_3,k_3)$ has order $j_1+j_2+j_3$. 

Now we distinguish between the orders of $I(j_1,k_1,j_2,k_2,j_3,k_3)$ as follows. \\

If $j_1+j_2+j_3 \leq m-D(\theta)$ then $$\#\{0\leq n<p^m:\bsx_n\in I(j_1,k_1,j_2,k_2,j_3,k_3)\}-p^m\lambda (I(j_1,k_1,j_2,k_2,j_3,k_3))=0.$$
This follows from Definition~\ref{def:t} together with $d_1=j_1$, $d_2=j_2$, $d_3=j_3$, and basic linear algebra. \\

If $j_1+j_2+j_3 > m-D(\theta)$ we want to show that $$\#\{0\leq n<p^m:\bsx_n\in I(j_1,k_1,j_2,k_2,j_3,k_3)\}=0$$ or in other words $I(j_1,k_1,j_2,k_2,j_3,k_3)$ remains empty. Here we make use of Lemma~\ref{lem:admissible}. 

Note that always $j_1+j_2\leq m/2-u$. Thus for $j_1+j_2+j_3 > m-D(\theta)\geq m-u$ we need $j_3>m/2$. We observe $\gamma_{j_i}^{(i)}\neq 0$ for $i=1,2,3$ for $j_1,j_2,j_3$ satisfying $j_1+j_2+j_3 > m-D(\theta)\geq m-u$ if and only if 
$j_1=\lambda_1 v$, $j_2=v-u+\lambda_2 v$, and $j_3=m/2 +\lambda_3 v$ with $\lambda_1,\lambda_3\in\{1,\ldots,m/(4v)\}$, $\lambda_2\in \{1,\ldots,m/(4v)-1\}$ and $\lambda_1+\lambda_2+\lambda_3\geq m/(2v)$. Thus $j_1+j_2+j_3 \geq m+v-u$. 

Suppose there exists $\tilde{n}\in\{0,1,\ldots,p^m-1\}$ such that $\bsx_{\tilde{n}}\in I(j_1,k_1,j_2,k_2,j_3,k_3)$ for such $j_1,j_2,j_3$. Then 
$$\|\bsx_{\tilde{n}}\ominus \bsx_{\overline{n}}\|_p\leq p^{-(j_1+j_2+j_3)}\leq p^{-(m+v-u)}\mbox{ and }\overline{n}\neq \tilde{n}.$$

From Lemma~\ref{lem:admissible} and Definition~\ref{def:admissible} we know that 
$$\|\bsx_{\tilde{n}}\ominus \bsx_{{\overline{n}}}\|_p>p^{-(m+D(\theta)+3)}.$$ 
Since $v=3(D(\theta)+1)$ and $D(\theta)\leq u<2D(\theta)$ we see 
$$m+v-u>m+3D(\theta)+3-2D(\theta)=m+D(\theta)+3.$$
Therefore 
$$p^{-(m+D(\theta)+3)}>p^{-(m+v-u)}\geq \|\bsx_{\tilde{n}}\ominus \bsx_{{\overline{n}}}\|_p>p^{-(m+D(\theta)+3)}$$
yields the desired contradiction. Altogether

\begin{eqnarray*}
\#\{0\leq n<p^m:\bsx_n\in J\}-p^m\lambda (J)&\leq& -\sum_{j_1,j_2,j_3\geq m+v-u}p^{m-(j_1+j_2+j_3)}\\
&\leq& -\sum_{{j_1,j_2,j_3,}\atop {j_1+j_2+j_3= m+v-u}}p^{u-v}.
\end{eqnarray*}

It remains to ensure at least $c m^2$ with $c>0$ possible choices of such $(j_1,j_2,j_3)$. As $j_1=\lambda_1 v$, $j_2=v-u+\lambda_2 v$, and $j_3=m/2 +\lambda_3 v$ with $\lambda_1,\lambda_3\in\{1,\ldots,m/(4v)\}$, $\lambda_2\in \{1,\ldots,m/(4v)-1\}$ and $\lambda_1+\lambda_2+\lambda_3\geq m/(2v)$ we easily see that for each choice of $(\lambda_1,\lambda_2)\in \{m/(8v),\ldots,m/(4v)\}\times  \{m/(8v),\ldots,m/(4v)-1\}$ there is exactly one $\lambda_3\in \{1,\ldots,m/(4v)\}$ such that $\lambda_1+\lambda_2+\lambda_3=m/(2v)$. This completes the proof of Proposition~\ref{thm:3}. \qed






\end{document}